\newcommand{\Real}{\mathbb R}
\newcommand{\T}{{\mathcal T}}
\newcommand{\C}{{\mathcal C}}
\newcommand{\tm}{\T M}
\newcommand{\To}{\longrightarrow}
\def\pa{\partial}
\theoremstyle{plain}
\newtheorem{theorem}{Theorem}
\newtheorem{lemma}{Lemma}
\newtheorem{proposition}{Proposition}
\theoremstyle{definition}
\newtheorem{definition}{Definition}
\theoremstyle{remark}
\newtheorem{remark}{Remark}
\newtheorem{example}{Example}
\numberwithin{equation}{section}
\begin{document}

\title{\bf{ {On Riemann curvature of spherically symmetric metrics}}}

\author{S. G. Elgendi }
\date{}

\maketitle
\vspace{-1.20cm}

\begin{center}
{Department of Mathematics, Faculty of Science,\\
Islamic University of Madinah, Madinah, Saudi Arabia}
\vspace{-8pt}
\end{center}


\begin{center}
salah.ali@fsc.bu.edu.eg, salahelgendi@yahoo.com
\end{center}

\vspace{0.3cm}

\begin{abstract}
  In this paper, studying the inverse problem, we establish a curvature compatibility condition on  a spherically symmetric Finsler metric. As an application, we characterize the spherically symmetric metrics of scalar curvature. We construct  a Berwald frame for a spherically symmetric Finsler surface and  calculate some associated geometric objects.   Several examples are provided and discussed.  Finally, we   give a note on a certain general  $(\alpha,\beta)$-metric which appears in the literature.
 \end{abstract}

\noindent{\bf Keywords:\/}\,   Inverse problem; Spherically symmetric metrics; Scalar curvature; Berwald frame; General  $(\alpha,\beta)$-metrics.

\medskip\noindent{\bf MSC 2020:\/}  53C60, 53B40.

\section{Introduction}

 In $\mathbb{R}^n$, a Finsler function $F$ on a convex domain $\Omega $ is considered spherically symmetric if $(\Omega,F)$ remains invariant under all rotations. Stated differently, $F$ is a spherically symmetric metric if $F(\mathcal{A} x,\mathcal{A} y)=F(x,y)$,  where $\mathcal{A} \in O(n)$, i.e., $F$ is orthogonally invariant (see \cite{Guo-Mo}). The spherically symmetric metrics were introduced by Rutz in \cite{Rutz}. In general relativity, Rutz extended the well-known Birkhoff theorem to the Finslerian area. The gravitational field's vacuum Einstein field equations can be solved using this class of Finsler metrics, for instance.  Many articles on spherically symmetric Finsler metrics have been published recently; as an example, we can consult { \cite{Gabran,Elgendi-SSM,Zhou_Mo,Sadeghia}.}

Let $|\cdot|$ be the standard Euclidean norm, and   $\langle \cdot, \cdot \rangle$ be the standard inner product on $\mathbb{R}^n$. A Finsler function $F$ on $\mathbb{B}^n(r_0)\subset\mathbb{R}^n$ is a spherically symmetric Finsler metric if, and only if there exists  a positive and smooth function $\phi $ such that $F(x,y)= u \phi(r,s)$, where   $r=|x|$, $u=|y|$,  and $s=\frac{\langle x, y \rangle}{|y|}$ for all $(x,y)\in T\mathbb{B}^n(r_0)\backslash \{0\}$.

 In this note, we construct a curvature compatibility condition on a spherically symmetric Finsler metric by studying the inverse issue of calculus of variation.   Given a spherically symmetric Finsler metric, the curvature compatibility criterion is given by
 \begin{equation*}
\phi_s R_1+    (s\phi+(r^2-s^2)\phi_s)R_{3} +\phi R_5 =0,
\end{equation*}
 where  $R_{1}, \ R_{3}$ and $R_{5}$ are given { below the relation   \eqref{Eq:R^i_j}}. Then, we classify    the spherically symmetric Finsler metrics  of scalar flag curvature. As an application of the curvature compatibility condition, we show that a spherically symmetric Finsler metric is  of scalar curvature  {$\textbf{K}=\textbf{K(x,y)}$} if and only if it is either   of dimension two or $R_1=K\phi^2$ and $R_3=0$.

For Finsler surfaces,  in   \cite{Berwald} Berwald has introduced a frame (called Berwald frame). The geometry of a Finslerian surface is completely determined in terms of  this frame. 
 We establish a Berwald frame for a spherically symmetric Finsler surface, moreover, we calculate its main scalar.

Now, let's turn our attention to a more general class than the spherically symmetric metrics called the general $(\alpha,\beta)$-metrics. For this purpose, let    $\alpha=\sqrt{a_{ij}y^iy^j}$ be a Riemannian metric and $\beta=b_i(x)y^i$ be a $1$-form on $M$.
 A   general $(\alpha,\beta)$-metric  $F$ is defined by  
 $$F=\alpha \varphi(b^2,s),\  s:=\frac{\beta}{\alpha}$$
 such that $\varphi$  is a positive smooth function, $ \|\beta_x\|_\alpha<b_0$ with some regularity conditions, for more details cf. \cite{Yu-Zhu}.

The class of general  $(\alpha,\beta)$-metric  generalizes interesting classes of Finsler metrics, for example, it generalizes the $(\alpha,\beta)$-metrics, namely, when $\varphi$ is independent of $b^2$. Also, it generalizes the spherically symmetric Finsler metrics when $\alpha=|y|$ is the standard  Euclidean norm, $\beta=\langle x , y \rangle$ is the standard  Euclidean inner product and $b^2=|x|^2$.

 In this part, we give a note on   { a general}  $(\alpha,\beta)$-metric  $F=\alpha \varphi(b^2,s)$, where $\alpha$ is a Riemannian metric,  $\beta=b_i(x)y^i$ is a $1$-form on the manifold $M$, $b^2=b_ib^i$, $b^i=a^{ij}b_j$,  $a^{ij}$ is the inverse of the Riemannian metric $\alpha$ and $s=\frac{\beta}{\alpha}$. For any dimension, we have the following equivalences:
 \begin{description}
 	\item[(a)] $ \varphi-s\varphi_s=0\Longleftrightarrow\varphi=f(b^2)s,$
 	\item[(b)]$\varphi\varphi_s-s(\varphi_s^2+\varphi \varphi_{ss})=0 \Longleftrightarrow \varphi(b^2,s)=f_1(b^2)s+f_2(b^2)\sqrt{b^2-s^2},$
 \end{description}
  {where} $ f_1$   and $ f_2$  are  arbitrary functions of $b^2$. Moreover, the metrics obtained in these equivalences lead to a degenerate metric; that is, $\det(g_{ij})=0$. Therefore,    such choices must be excluded.  The   {case (b) is} mentioned in several articles, for example, we refer to \cite{Theorem-1.1-Shen,Taybi,Example-9.2}.

\section{Preliminaries}

Let $M$ be  {an} $n$-dimensional manifold, the slit tangent bundle of nonzero tangent vectors to be $(\T M,\pi,M)$, and the tangent bundle to be $(TM,\pi_M,M)$.  On the base manifold $M$, let $(x^i) $ represents a local coordinate system, and on $TM$, let $(x^i, y^i)$ represent the induced coordinates.  $J = \frac{\partial}{\partial y^i} \otimes dx^i$ defines the vector $1$-form called the natural almost-tangent structure $J$ of $T M$. Canonical or Liouville vector field is the vector field $\C=y^i\frac{\partial}{\partial y^i}$ on $TM$.

{Let  $X$ be a vector field on $M$,  then   $ i_{X}$  (resp. $\mathcal{L}_X$)  denote the interior
product  (resp. the Lie derivative)  with respect to $X$. Let $f$ be a smooth real-valued function  on $M$, then $df$ is the differential of $f$.   A vector
$k$-form $L$ can be associated with two graded derivations $i_L$ and $d_L$ of the Grassman algebra of $ M$  such that
$$i_Lf=0, \,\,\,\, i_Ldf=df\circ L,\,\,\,\,\,\,  f \ \text{is a smooth function on $M$},$$
$$d_L:=[i_L,d]=i_L\circ d-(-1)^{k-1}di_L.$$}

A vector field $S\in \mathfrak{X}(\T M)$ that has the properties $JS = \C$ and $[\C, S] = S$ is a spray on $M$. Locally, $S$ has the form 
\begin{equation*}
  \label{eq:spray}
  S = y^i \frac{\partial}{\partial x^i} - 2G^i\frac{\partial}{\partial y^i},
\end{equation*}
where $G^i=G^i(x,y)$ are positively $2$-homogeneous
functions in   $y$ and called the \emph{spray coefficients}.

  A nonlinear connection is an $n$-dimensional distribution $H : u \in \tm \rightarrow H_u\subset T_u(\tm)$  supplementary distribution to the vertical distribution. Therefore,    for any $u \in \tm$, we have
\begin{equation}
\label{eq:direct_sum}
T_u(\tm) = H_u(\tm) \oplus V_u(\tm).
\end{equation}
For a given spray $S$, we have a canonical nonlinear connection $\Gamma$ given by $\Gamma = [J,S]$. Hence, we have  the   horizontal and vertical projectors
\begin{equation*}
  \label{projectors}
    h=\frac{1}{2}  (Id + [J,S]), \,\,\,\,            v=\frac{1}{2}(Id - [J,S])
\end{equation*}
  Locally, the   projectors $h$ and $v$ are given as follows
$$h=\frac{\delta}{\delta x^j}\otimes dx^j, \quad\quad v=\frac{\partial}{\partial y^j}\otimes \delta y^j,$$
$$\frac{\delta}{\delta x^j}=\frac{\partial}{\partial x^j}-G^h_j(x,y)\frac{\partial}{\partial y^h},\quad \delta y^j=dy^j+G^j_h(x,y)dx^h, \quad G^h_j(x,y)=\frac{\partial G^h}{\partial y^j}.$$
Moreover, the coefficients $G^h_{ij}$ of   Berwald connection are given by
$ G^h_{ij}=\frac{\partial G^h_j}{\partial y^i}.$

{ 
 The components $R^i_j$ of the Riemann curvature $R$ are given as follows
\begin{equation}
  \label{eq:16}
  R= R^i_j \,  dx^j \otimes \frac{\partial}{\partial y^i},
  \qquad   R^i_j =   2\frac{\partial G^i}{\partial x^j} - S(G^i_j) -
  G^i_k G^k_j.
\end{equation}}
Throughout  we use the notations
$$\pa_j:=\frac{\partial}{\partial x^j}, \quad \dot{\partial}_j=\frac{\partial}{\partial y^j}.$$
\begin{definition}
An $n$-dimensional Finsler manifold is a pair $(M,F)$, where $M$ is a  differentiable manifold of dimension $n$ and $F$ is a function   $F: TM \To \Real   $  satisfying the following conditions:
 \begin{description}
    \item[(a)] $F$ is    strictly positive and smooth on $\T M$,
    \item[(b)] $F(x,\lambda y)=\lambda F(x,y)$ for all $\lambda \in \mathbb{R}^+$, that is,  $F$ is positively homogeneous of degree $1$ in $y$,
            \item[(c)] The matrix   $g_{jk}= \dot{\partial}_j \dot{\partial}_k E$ has rank $n$ on $\T M$, where $g_{jk}$  are the components of the metric tensor and $E:=\frac{1}{2}F^2$ is  called the energy function of $F$.
 \end{description}
 \end{definition}
By making use of the Euler-Lagrange equation
\begin{equation*}
  \label{eq:EL}
  i_Sdd_JE=-dE,
\end{equation*}
    there exists  a unique spray $S$ on $TM$  called the
{geodesic spray} of the corresponding Finsler metric $F$.
\begin{definition}
 Assume that  $S$ is a spray on a manifold $M$, then $S$ is said to be  Finsler metrizable  if there
  is   a Finsler metric $F$ whose  geodesic spray     $S$. By \cite{MZ_ELQ}, we can conclude that a given  spray  $S$ is   Finsler metrizable if and only if there is a Finsler metric $F$ such that the following system is satisfied
   \begin{equation}
  \label{metrizable_system}
  d_hF=0, \quad d_\C F=F.
  \end{equation}
  \end{definition}

\section{Spherically symmetric metrics}

Consider  the standard Euclidean norm  $|\cdot|$ and the standard inner product $\langle \cdot , \cdot \rangle$ on $\mathbb{R}^n$.  Then, a Finsler metric $F$ on $\mathbb{B}^n(r_0)\subset\mathbb{R}^n$   is  spherically symmetric Finsler metric if and only if there is a   function $\phi:[0,r_0)\times\mathbb{R}^n\to \mathbb{R}$ which is positive,    smooth and satisfies
$F(x,y)=|y|\phi\left(|x|,\frac{\langle x, y \rangle}{|y|}\right)$
where $(x,y)\in T\mathbb{B}^n(r_0)\backslash \{0\}$. Or simply we write $F=u\ \phi(r,s)$ where $r=|x|$, $u=|y|$ and $s=\frac{\langle x, y \rangle}{|y|}$.

It is clear that the class of spherically symmetric metrics is a special class of  general $(\alpha,\beta)$-metrics. Moreover,   the class of general $(\alpha,\beta)$-metrics is introduced   by C. Yu et al. in \cite{Yu-Zhu}. Therefore, one can conclude that a  spherically symmetric metric  $F=u\phi(r,s)$  is  regular if and only if $\phi$ is a positive   smooth function,  and
\begin{equation}
\label{Regular_condition}
\phi-s\phi_s>0,\quad \phi-s\phi_s+(r^2-s^2)\phi_{ss}>0
\end{equation}
if $n\geq 3$, and  $\phi-s\phi_s+(r^2-s^2)\phi_{ss}>0$  if $n=2$ for all $|s|\leq r<r_0$.
Throughout, the subscript $s$ (resp. $r$) refers to the derivative with respect to $s$ (resp. $r$).
For more details  we refer, for example,   to \cite{Guo-Mo,Zhou_Mo}.

{  Since the components of the metric tensor associated with the Euclidean norm are   the Kronecker delta $\delta_{ij}$, then we lower  the index $i$ in $y^i$ and $x^i$ as follows }
$$y_i:=\delta_{ih} y^h, \quad x_i:=\delta_{ih} x^h.$$
Notice that $y_i$ and $x_i$ are the same as $y^i$ and $x^i$ respectively. Moreover, we have
$$y^iy_i=u^2, \quad x^ix_i=r^2, \quad y^ix_i=x^iy_i=\langle x,y \rangle.$$
The components $g_{jk}$ of the metric tensor of a the spherically symmetric metric $F=u \phi(r,s)$ are written in the form 
\begin{align}
\label{Eq:g_ij}
g_{jk}=&\sigma_0\ \delta_{jk}+\sigma_1x_jx_k+\frac{\sigma_2}{u} (x_jy_k+x_ky_j)+\frac{\sigma_3}{u^2}y_jy_k,
\end{align}
where  $$\sigma_0=\phi(\phi-s\phi_s),\ \ \sigma_1= \phi_s^2+\phi\phi_{ss},\ \ \sigma_2= (\phi-s\phi_s)\phi_s-s\phi\phi_{ss},  \ \ \sigma_3= s^2\phi\phi_{ss}-s(\phi-s\phi_s)\phi_s.$$
We maintain consistency in the indices in this way.    One can find   articles in the literature considering  the spherically symmetric Finsler metrics  with some kind of inconsistency  with the indices.  For example, in \cite{Zhou_Mo}, one can see that in a tensorial equation there is an up index in one side and  the same index is a lower index in the other side.

The following are the components $g^{jk}$ of the inverse metric tensor 
\begin{align}
\label{Eq:g^ij}
g^{jk}=&\rho_0\delta^{jk}+ \frac{\rho_1}{u^2}y^jy^k+ \frac{\rho_2}{u} (x^jy^k+x^ky^j)+\rho_3x^jx^k,
\end{align}
where
$$
 \rho_0=\frac{1}{\phi(\phi-s\phi_s)}, \quad \rho_1=\frac{(s\phi+(r^2-s^2)\phi_s)(\phi\phi_s-s\phi_s^2-s\phi\phi_{ss})}{\phi^3(\phi-s\phi_s)(\phi-s\phi_s+(r^2-s^2)\phi_{ss})},$$
 $$\rho_2=-\frac{\phi\phi_s-s\phi_s^2-s\phi\phi_{ss}}{\phi^2(\phi-s\phi_s)(\phi-s\phi_s+(r^2-s^2)\phi_{ss})},\quad \rho_3=-\frac{\phi_{ss}}{\phi(\phi-s\phi_s)(\phi-s\phi_s+(r^2-s^2)\phi_{ss})}.$$
  The coefficients  $G^h$ of the geodesic spray  of $F$ are  given by
\begin{equation}\label{G}
  G^h=uPy^h+u^2 Qx^h,
\end{equation}
where the functions $P$ and $Q$ are given by
\begin{equation}\label{P,Q}
 Q:=\frac{1}{2 r}\frac{ -\phi_r+s\phi_{rs}+r\phi_{ss}}{\phi-s\phi_s+(r^2-s^2)\phi_{ss}}, \quad P:=-\frac{Q}{\phi}(s\phi +(r^2-s^2)\phi_s)+\frac{1}{2r\phi}(s\phi_r+r\phi_s).
\end{equation}

\section{Curvature compatibility condition}

  The following identities are useful for subsequent use.
\begin{equation}
\label{Eq:derivatives}
    \begin{split}
        \frac{\partial r}{\partial x^j}= \frac{1}{r}x_j, \quad  \frac{\partial u}{\partial y^j}= \frac{1}{u}y_j,  \quad  \frac{\partial u}{\partial x^j}= 0,         \quad  \frac{\partial r}{\partial y^j}= 0,  \quad
         \frac{\partial s}{\partial x^j}= \frac{1}{u}y_j, \quad  \frac{\partial s}{\partial y^j}= \frac{1}{u}(x_k-\frac{s}{u}y_k).   \\
    \end{split}
\end{equation}
The coefficients  $G^i_j$ of the nonlinear connection   of a spherically symmetric metric $F$ are derived as follows
\begin{equation}
\label{Eq:G^i_j}
G^i_j=u P \delta^i_j+P_s x_jy^i+  \frac{1}{u}(P-sP_s)  y_jy^i+u Q_s x^ix_j+(2Q-sQ_s) x^iy_j.
\end{equation}
The components of the Riemann curvature (Jacobi endomorphism) $R^i_j$  are given by \cite{Zhou_Mo} as follows
\begin{equation}
\label{Eq:R^i_j}
R_{j}^{i}=u^{2}R_{1} \delta^{i}_{j}+R_{2}  y^{i} y_{j}+u^{2}R_{3}  x^{i} x_{j}+uR_{4}  x^{i} y_{j}+uR_{5} x_{j} y^{i}
\end{equation}
where
\begin{align*}
R_{1}=& 2 Q-\frac{s}{r} P_{r}-P_{s}+2\left(r^{2}-s^{2}\right) P_{s} Q+P^{2}+2 s P Q \\
R_{2}=& P_{s}-\frac{s}{r} P_{r}+\frac{s^{2}}{r} P_{r s}+s P_{s s}-2 Q+s Q_{s}-2 s P P_{s}-4 s P Q+4 s^{2} P_{s} Q-P^{2} \\
&-2 s\left(r^{2}-s^{2}\right) P_{s s} Q+3 s P P_{s}+s^{2} P Q_{s}+\left(r^{2}-s^{2}\right) s P_{s} Q_{s}-2 r^{2} P_{s} Q \\
R_{3}=& \frac{2}{r} Q_{r}-Q_{s s}-\frac{s}{r} Q_{r s}+2\left(r^{2}-s^{2}\right) Q Q_{s s}+4 Q^{2}-\left(r^{2}-s^{2}\right) Q_{s}^{2}-2 s Q Q_{s} \\
R_{4}=&-\frac{2 s}{r} Q_{r}+\frac{s^{2}}{r} Q_{r s}+s Q_{s s}-2\left(r^{2}-s^{2}\right) s Q Q_{s s}+\left(r^{2}-s^{2}\right) s Q_{s}^{2}-4 s Q^{2} \\
&+2 s^{2} Q Q_{s} \\
R_{5}=& \frac{2}{r} P_{r}-\frac{s}{r} P_{r s}-P_{s s}-Q_{s}+2 P Q-2 s P_{s} Q+2\left(r^{2}-s^{2}\right) P_{s s} Q-P P_{s} \\
&-s P Q_{s}-\left(r^{2}-s^{2}\right) P_{s} Q_{s}.
\end{align*}
For a   spherically symmetric Finsler metric $F=u \phi$, we have the geodesic spray \eqref{G}  determined  by    the functions $P$ and $Q$.  But for given functions $P$ and $Q$, we have to solve the inverse    problem to find the   Finsler metric whose    geodesic spray is given  by $P$ and $Q$.   In \cite{Elgendi-SSM},   two metrizability  conditions are investigated as follows.
\begin{lemma}\cite{Elgendi-SSM}
Le  $P(r,s)$ and $Q(r,s)$ be given, then the Finsler metric $F=u\phi(r,s)$ whose geodesic spray is given by $P$ and $Q$ is determined by   $\phi$ that satisfies the     metrizability conditions:
\begin{equation}
\label{Comp_C_C_2}
    \begin{split}
       C_1:= &(1+sP-(r^2-s^2)(2Q-sQ_s))\phi_s+(s P_s-2P-s(2Q-sQ_s))\phi =0,   \\
        C_2:=& \frac{1}{r}\phi_r-(P+Q_s(r^2-s^2))\phi_s-(P_s+sQ_s) \phi =0.
    \end{split}
\end{equation}
\end{lemma}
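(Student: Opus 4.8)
The plan is to extract the two conditions directly from the metrizability system \eqref{metrizable_system}: the spray $G^h=uPy^h+u^2Qx^h$ is the geodesic spray of $F=u\phi(r,s)$ if and only if $d_\C F=F$ and $d_hF=0$. The equation $d_\C F=F$ holds automatically, since $u=|y|$ is positively $1$-homogeneous in $y$ and $s=\langle x,y\rangle/|y|$ is $0$-homogeneous, so $F$ is positively $1$-homogeneous and Euler's relation gives $d_\C F=y^i\paa_iF=F$. Hence the substance is entirely in $d_hF=0$; since $d_hF=i_hdF=dF\circ h$ and $h=\frac{\delta}{\delta x^j}\otimes dx^j$, this reads $\frac{\delta F}{\delta x^j}=\pa_jF-G^h_j\,\paa_hF=0$ for every $j$, with $G^h_j$ supplied by \eqref{Eq:G^i_j}.

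Next I would compute the two gradients of $F=u\phi(r,s)$ from the identities \eqref{Eq:derivatives}:
\[ \pa_jF=\frac{u}{r}\phi_r\,x_j+\phi_s\,y_j,\qquad \paa_hF=\phi_s\,x_h+\frac{\phi-s\phi_s}{u}\,y_h. \]
Writing $\paa_hF=A\,x_h+B\,y_h$ with $A=\phi_s$ and $B=(\phi-s\phi_s)/u$, I would contract \eqref{Eq:G^i_j} against $\paa_hF$ term by term, using $y^hy_h=u^2$, $x^hx_h=r^2$ and $x^hy_h=y^hx_h=\langle x,y\rangle=su$ to reduce $G^h_j\,\paa_hF$ to the form $(\cdots)x_j+(\cdots)y_j$. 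Subtracting from $\pa_jF$, the equation $\frac{\delta F}{\delta x^j}=0$ becomes $\mathcal X(r,s)\,x_j+\mathcal Y(r,s)\,y_j=0$. For $n\ge 2$ the vectors $x$ and $y$ are linearly independent on a dense open subset of $T\mathbb B^n(r_0)\setminus\{0\}$, so $\mathcal X$ and $\mathcal Y$ must vanish there, hence everywhere by continuity; these are the two desired PDEs for $\phi$.

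The remaining step is the algebraic reduction of $\mathcal X,\mathcal Y$ to the compact forms $C_1,C_2$ of \eqref{Comp_C_C_2}. Dividing $\mathcal X$ by $u$ and cancelling the $s\phi_sP_s$ contributions produces $\frac1r\phi_r-(P+(r^2-s^2)Q_s)\phi_s-(P_s+sQ_s)\phi$, i.e. $C_2=0$; substituting $B=(\phi-s\phi_s)/u$ back into $\mathcal Y$ and cancelling the $s^2\phi_sP_s$ contributions produces $(1+sP-(r^2-s^2)(2Q-sQ_s))\phi_s+(sP_s-2P-s(2Q-sQ_s))\phi$, i.e. $C_1=0$. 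I expect the only real obstacle to be the bookkeeping: several cancellations must line up for the short forms to emerge, so I would keep $A$ and $B$ abstract until the end of the contraction and use the observation that the $x_j$-coefficient must carry exactly one more power of $u$ than the $y_j$-coefficient as a running consistency check.
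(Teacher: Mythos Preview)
The paper does not supply a proof of this lemma; it is quoted from \cite{Elgendi-SSM}. Your derivation is correct and is precisely the natural one: the homogeneity condition $d_\C F=F$ is automatic, and $d_hF=0$ written as $\pa_jF-G^h_j\,\paa_hF=0$ with the ingredients \eqref{Eq:derivatives} and \eqref{Eq:G^i_j} collapses to an expression $\mathcal X\,x_j+\mathcal Y\,y_j=0$, whence $\mathcal X=\mathcal Y=0$.

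Two minor remarks. First, the bookkeeping is actually cleaner than you anticipate: once you use $y^h\paa_hF=u\phi$ and $x^h\paa_hF=s\phi+(r^2-s^2)\phi_s$, the $x_j$-coefficient is immediately $u\bigl(\tfrac{1}{r}\phi_r-(P+(r^2-s^2)Q_s)\phi_s-(P_s+sQ_s)\phi\bigr)$ and the $y_j$-coefficient is immediately $C_1$; no $s\phi_sP_s$ terms appear, so no cancellations are needed. Second, for the step ``$\mathcal X x_j+\mathcal Y y_j=0\Rightarrow \mathcal X=\mathcal Y=0$'' you may, instead of invoking generic linear independence of $x$ and $y$, use the contraction argument the paper employs in the proof of Proposition~\ref{Prop: curv_comp}: contracting with $x^j$ and $y^j$ gives $r^2\mathcal X+su\,\mathcal Y=0$ and $su\,\mathcal X+u^2\mathcal Y=0$, and eliminating yields $(r^2-s^2)\mathcal X=0$, hence $\mathcal X=0$ and then $\mathcal Y=0$. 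Either route is fine.
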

In \cite{MZ_ELQ}, for a Finsler function $F$ with the associated $h$ and $R$,  Musznay showed that $d_hF=0$ implies   $d_RF=0$. Hence,  we introduce a   compatibility condition on a spherically symmetric Finsler metric as follows:
\begin{proposition}
\label{Prop: curv_comp}
For a  spherically symmetric Finsler metric $F=u\phi(r,s)$, the curvature compatibility condition  on $F$ is given by
\begin{equation}
\label{Comp_C_3}
        C_3: = \phi_s R_1+    (s\phi+(r^2-s^2)\phi_s)R_{3} +\phi R_5 =0.
\end{equation}
\end{proposition}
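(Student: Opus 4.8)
The plan is to obtain \eqref{Comp_C_3} as the explicit form, in the spherically symmetric setting, of the implication $d_hF=0\Rightarrow d_RF=0$ recalled just above from \cite{MZ_ELQ}. Since $F=u\phi(r,s)$ is a Finsler metric with geodesic spray \eqref{G}, it satisfies the metrizability system \eqref{metrizable_system}; in particular $d_hF=0$, hence $d_RF=0$ for the Riemann curvature $R$ of \eqref{eq:16}. The proof thus reduces to writing out $d_RF$.

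First I would unwind the definition. Since $R=R^i_j\,dx^j\otimes\dot{\partial}_i$ is a vector $1$-form and $F$ is a function, $i_RF=0$, so
\begin{equation*}
d_RF=[i_R,d]F=i_R\,dF=dF\circ R=\bigl(R^i_j\,\dot{\partial}_iF\bigr)\,dx^j ,
\end{equation*}
and therefore $d_RF=0$ is equivalent to the $n$ scalar identities $R^i_j\,\dot{\partial}_iF=0$, $j=1,\dots,n$.

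Next I would compute the two factors and contract. From $F=u\phi(r,s)$ and the derivative formulas \eqref{Eq:derivatives} one gets $\dot{\partial}_iF=\phi_s\,x_i+\frac{1}{u}(\phi-s\phi_s)\,y_i$. Substituting the expansion \eqref{Eq:R^i_j} of $R^i_j$ and contracting, every term collapses onto $x_j$ and $y_j$ by means of $y^iy_i=u^2$, $x^ix_i=r^2$, $x^iy_i=us$, so that $R^i_j\,\dot{\partial}_iF=A\,x_j+B\,y_j$ for scalars $A,B$ depending only on $r,s,u$. The heart of the computation is the simplification of the $x_j$-coefficient $A$: two $s\phi_s R_5$ contributions cancel and the $R_3$-terms regroup into the combination $s\phi+(r^2-s^2)\phi_s$ (the same one occurring in $\rho_1,\rho_2$ of \eqref{Eq:g^ij} and in $C_1$ of \eqref{Comp_C_C_2}), yielding
\begin{equation*}
A=u^2\bigl(\phi_sR_1+(s\phi+(r^2-s^2)\phi_s)R_3+\phi R_5\bigr)=u^2\,C_3 .
\end{equation*}
Using in addition $R_4=-sR_3$, which is visible on comparing the formulas in \eqref{Eq:R^i_j}, together with $R_1+R_2+sR_5=0$ (equivalently the standard identity $R^i_jy^j=0$), one finds $B=-usC_3$ as well, so in fact $R^i_j\,\dot{\partial}_iF=u^3C_3\,\dot{\partial}_js$.

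Finally I would extract the scalar equation. That $R^i_j\,\dot{\partial}_iF=0$ for every $j$ reads $A\,x+B\,y=0$ as a vector in $\mathbb{R}^n$; contracting once with $x^j$ and once with $y^j$ and eliminating $B$ gives $A(r^2-s^2)=0$, i.e.\ $u^2(r^2-s^2)C_3=0$, so $C_3=0$ wherever $s^2<r^2$, and since $C_3=C_3(r,s)$ this forces $C_3\equiv 0$ by continuity. I expect the only genuine difficulty to be the bookkeeping in the contraction step — carrying the five functions $R_1,\dots,R_5$ and the three inner products along so that $A$ collapses cleanly to $u^2C_3$; conceptually nothing is needed beyond the quoted implication of \cite{MZ_ELQ} and the explicit formulas \eqref{Eq:R^i_j} and \eqref{Eq:derivatives}.
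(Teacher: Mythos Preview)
Your proof is correct and follows essentially the same route as the paper's: both start from $d_RF=0$ in the local form $R^i_j\,\dot{\partial}_iF=0$, expand using \eqref{Eq:R^i_j} and \eqref{Eq:derivatives}, collect the $x_j$- and $y_j$-coefficients, and use $R_4=-sR_3$ together with $R_1+R_2+sR_5=0$ to reduce to the single scalar condition $C_3=0$ via the contraction argument $(r^2-s^2)C_3=0$. The only cosmetic difference is that the paper first proves the two coefficients vanish separately and then shows one equation is $-s$ times the other, whereas you observe $B=-(s/u)A$ upfront and package the result as $R^i_j\,\dot{\partial}_iF=u^3C_3\,\dot{\partial}_js$.
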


\begin{proof}
Since the geodesic spray of $F=u\phi$ has the Riemann curvature $R$, then $d_RF=0$. Locally, this condition reads
 \begin{equation}\label{Eq:d_RF=0}
 R^i_j \frac{\partial F}{\partial y^i}=0.
 \end{equation}
 We have
 $$\frac{\partial F}{\partial y^i}=\frac{\phi}{u} y_i+\phi_s \left(x_i-\frac{s}{u}y_i\right), \quad x^i\frac{\partial F}{\partial y^i}=s\phi+(r^2-s^2)\phi_s. $$
Plugging the above formulae together with \eqref{Eq:R^i_j} into the condition \eqref{Eq:d_RF=0}, we have the following
\begin{align}\label{Eq:4.6}
\nonumber  u^{2}R_{1} \left( \frac{\phi}{u} y_j+\phi_s\left(x_j-\frac{s}{u}y_j\right)\right)&+ u\phi R_{2}    y_{j}+u^{2}R_{3} (s\phi+(r^2-s^2)\phi_s)   x_{j}\\ &+uR_{4}  (s\phi+(r^2-s^2)\phi_s) y_{j}+u^2\phi R_{5} x_{j}=0
\end{align}
By collecting the coefficients of $y_j$ and $x_j$, we get the following
\begin{align*}
&u \left(  (\phi-s\phi_s)R_{1}   + \phi R_{2}  +   (s\phi+(r^2-s^2)\phi_s) R_{4} \right) y_{j}\\
&+u^2\left( \phi_s R_1+    (s\phi+(r^2-s^2)\phi_s)R_{3} +\phi R_5\right) x_{j}=0.
\end{align*}
{Now, assume that we have two functions $\chi$ and $\psi$  such that
$$\chi  x_i+\psi  y_i=0.$$ 
Then, by contraction   by $x^i$ and then by $y^i$, taking the fact that $u\neq0$ into account, we   obtain  the  equations 
$$r^2 \chi +su\psi=0, \quad s\chi+u\psi=0.$$
Multiplying the second equation by $s$ and by subtraction, we have  $(r^2-s^2) \chi=0$ which holds for all $r$ and $s$, that is, $\chi=0$,  and therefore $\psi=0$. Using this property, \eqref{Eq:4.6} implies  
   the two equations}
\begin{equation}
\label{Eq:}
    \begin{split}
        & (\phi-s\phi_s)R_{1}   + \phi R_{2}  +   (s\phi+(r^2-s^2)\phi_s)R_{4} =0,   \\
         & \phi_s R_1+    (s\phi+(r^2-s^2)\phi_s)R_{3} +\phi R_5 =0.
    \end{split}
\end{equation}
Using the facts that $R_4=-sR_3$ and $R_2=-R_1-sR_5$, {  then  we have
\begin{align*}
0&=(\phi-s\phi_s)R_{1}   + \phi R_{2}  +   (s\phi+(r^2-s^2)\phi_s)R_{4}\\
&=(\phi-s\phi_s)R_{1}   - \phi  (R_1+sR_5) -s   (s\phi+(r^2-s^2)\phi_s)R_{3}\\
&=-s(\phi_s R_1+    (s\phi+(r^2-s^2)\phi_s)R_{3} +\phi R_5).
\end{align*} 
Therefore, \eqref{Eq:} reduces to the condition}
\begin{equation*}
         \phi_s R_1+    (s\phi+(r^2-s^2)\phi_s)R_{3} +\phi R_5 =0.
\end{equation*}
This completes the proof.
\end{proof}

\subsection{Spherically symmetric metrics of scalar flag curvature}

  It is well known that a Finsler metric $F$ has scalar flag curvature $K(x,y)$ if and only if
$$
R_{j}^{i}=K F^{2}\left(\delta^{i}_{j}-\frac{y^{i}}{F}\frac{\partial F}{\partial y^j}\right).
$$
We define the following covector
\begin{equation}
\label{Eq:m_j}
n_j:= x_j-\frac{s}{u}y_j .
\end{equation}
Raising the index $j$ in $n_j$ by the Euclidean inverse metric $\delta^{ij}$, we get the vector ${\bf{n}}^i:=n_j \delta^{ij}=x^i-\frac{s}{u}y^i$.
\begin{lemma}\label{Prop:n_i}
The vector $\bf{n}^i$ and the covector $n_i$ satisfy the following properties:
\begin{description}
\item[(a)] $\bf{n}^i$ is orthogonal to $y^i$ with respect the Eculidean metric; that is, ${\bf{n}}^iy^j\delta_{ij}=0$ or equivalently, $y^i{n}_i=y_i{\bf{n}}^i=0$.

\item[(b)]  $x^in_i=x_i{\bf{n}}^i={\bf{n}}^in_i=r^2-s^2$.

\item[(c)] $\frac{\partial s}{\partial y^i}=\frac{1}{u}n_i$,  $\frac{\partial n_i}{\partial y^j}=\frac{s}{u^3}\left(y_iy_j-u^2\delta_{ij}\right)-\frac{1}{u^2}n_j  y_i$.

\item[(d)]$n_i\neq 0$ and $r^2-s^2\neq 0$.
\end{description}
\end{lemma}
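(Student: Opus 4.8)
The plan is to verify each of the four items (a)–(d) by direct computation from the definition $n_j = x_j - \frac{s}{u}y_j$, using only the elementary identities $y^iy_i = u^2$, $x^ix_i = r^2$, $x^iy_i = \langle x,y\rangle = su$, together with the derivative formulas collected in \eqref{Eq:derivatives}. First, for (a) I would contract $n_i$ with $y^i$: $y^i n_i = y^i x_i - \frac{s}{u}y^i y_i = su - \frac{s}{u}u^2 = 0$, and the same computation with indices raised gives $y_i\mathbf{n}^i = 0$; orthogonality with respect to $\delta_{ij}$ is just a restatement of this since $\mathbf{n}^i y^j \delta_{ij} = y_i\mathbf{n}^i$. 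For (b) I would contract with $x^i$: $x^i n_i = x^i x_i - \frac{s}{u}x^i y_i = r^2 - \frac{s}{u}(su) = r^2 - s^2$, and $\mathbf{n}^i n_i = (x^i - \frac{s}{u}y^i)n_i = (r^2-s^2) - \frac{s}{u}\cdot 0 = r^2 - s^2$ using (a).

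For (c), the first identity $\frac{\partial s}{\partial y^i} = \frac{1}{u}n_i$ is immediate: rewriting the last formula of \eqref{Eq:derivatives} as $\frac{\partial s}{\partial y^i} = \frac{1}{u}(x_i - \frac{s}{u}y_i) = \frac{1}{u}n_i$. For $\frac{\partial n_i}{\partial y^j}$ I would differentiate $n_i = x_i - \frac{s}{u}y_i$ directly, treating $x_i$ as constant in $y$: using $\frac{\partial y_i}{\partial y^j} = \delta_{ij}$, $\frac{\partial u}{\partial y^j} = \frac{1}{u}y_j$, and $\frac{\partial s}{\partial y^j} = \frac{1}{u}n_j$, the product/quotient rule gives $\frac{\partial n_i}{\partial y^j} = -\frac{1}{u}\cdot\frac{1}{u}n_j\cdot y_i - s\cdot\big(-\frac{1}{u^2}\cdot\frac{1}{u}y_j\big)y_i - \frac{s}{u}\delta_{ij}$, and collecting terms yields $\frac{s}{u^3}(y_iy_j - u^2\delta_{ij}) - \frac{1}{u^2}n_j y_i$ after recognizing $\frac{s}{u^3}y_iy_j$ from the second term. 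This is the claimed expression.

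For (d), the key point is that $s = \frac{\langle x,y\rangle}{|y|}$ satisfies the strict Cauchy–Schwarz bound $|s| \le r$ on the domain, but equality $s^2 = r^2$ would force $y$ to be parallel to $x$, which together with the fact that $(x,y)$ ranges over $T\mathbb{B}^n(r_0)\setminus\{0\}$ with $n \ge 2$ means $s^2 - r^2$ cannot vanish identically; more carefully, in the calculus we treat $r$ and $s$ as independent variables subject only to $|s| \le r < r_0$, so $r^2 - s^2$ is not the zero function and hence the identities above (and the curvature computations) are not vacuous. I would phrase (d) as the statement that $r^2 - s^2$ is not identically zero (so that, e.g., dividing by it or arguing "a polynomial in $r,s$ vanishing for all admissible $r,s$ must be the zero polynomial" is legitimate), and that consequently $n_i \ne 0$ since $\mathbf{n}^in_i = r^2-s^2 \ne 0$ by (b). The only mild subtlety — the "main obstacle," though it is minor — is being precise about the sense in which $r,s$ are independent: one works on the region $\{|s|\le r < r_0\}$, which has nonempty interior, so any smooth (indeed polynomial) relation holding on it holds identically, and $r^2-s^2$ is visibly nonzero on the interior. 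Everything else is a routine substitution using \eqref{Eq:derivatives}.
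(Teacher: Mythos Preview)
Your proposal is correct and is exactly the kind of direct computation the paper has in mind: the paper's own proof reads in full ``The proof is straightforward calculations, so we omit it.'' Your verification of (a)--(c) from the definition $n_j=x_j-\frac{s}{u}y_j$ together with \eqref{Eq:derivatives} is the intended argument, and your reading of (d) (that $r^2-s^2$ is not identically zero on the domain $|s|\le r<r_0$, hence $n_i$ is not identically zero by (b)) matches how the lemma is actually invoked later, e.g.\ in the proof of Theorem~\ref{Th:scalar_curv}.
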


\begin{proof}   {  The proof is  straightforward calculations, so we omit it.}
\end{proof}
{As an application of the curvature compatibility condition   \eqref{Comp_C_3}, we have the following theorem which obtained first by  \cite{Huang} but in a completely different treatment.} 
\begin{theorem}
\label{Th:scalar_curv}
Spherically symmetric Finsler metrics are of scalar curvature $K(x,y)$ if and only if  either they are of dimension two or $R_1=K\phi^2$ and $R_3=0$.
\end{theorem}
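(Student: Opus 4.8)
The plan is to match the intrinsic decomposition \eqref{Eq:R^i_j} of $R^i_j$ against the defining identity for scalar flag curvature, $R^i_j=KF^2\big(\delta^i_j-\frac{y^i}{F}\dot{\partial}_jF\big)$, and to read off what each dimension forces. First I would rewrite the right-hand side in spherically symmetric variables: from the proof of Proposition \ref{Prop: curv_comp} we have $\dot{\partial}_jF=\frac{\phi}{u}y_j+\phi_s n_j$ with $n_j=x_j-\frac{s}{u}y_j$ and $F^2=u^2\phi^2$, so $\delta^i_j-\frac{y^i}{F}\dot{\partial}_jF=\delta^i_j-\frac{1}{u^2}y^iy_j-\frac{\phi_s}{u\phi}y^in_j$, and substituting $n_j=x_j-\frac{s}{u}y_j$ gives
\begin{equation*}
KF^2\Big(\delta^i_j-\frac{y^i}{F}\dot{\partial}_jF\Big)=Ku^2\phi^2\,\delta^i_j+(Ks\phi\phi_s-K\phi^2)\,y^iy_j-Ku\phi\phi_s\,x_jy^i,
\end{equation*}
an expression carrying no $x^ix_j$ and no $x^iy_j$ component. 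Comparing this with \eqref{Eq:R^i_j} is then linear algebra in the five tensor fields $\delta^i_j,\ y^iy_j,\ x^ix_j,\ x^iy_j,\ x_jy^i$.

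For the forward implication with $n\ge 3$, I would first record that these five fields are pointwise linearly independent wherever $x$ and $y$ are linearly independent (so $r^2-s^2\neq0$, Lemma \ref{Prop:n_i}(d)): every combination of the four rank-one fields has image inside $\mathrm{span}\{x,y\}$, hence rank $\le 2<n$, so it cannot offset a nonzero multiple of $\delta^i_j$; and the four rank-one fields are themselves independent because the matrix with rows $(r^2,su)$ and $(su,u^2)$ is nonsingular, its determinant being $u^2(r^2-s^2)\neq0$. Matching coefficients in \eqref{Eq:R^i_j} then forces $R_1=K\phi^2$ and $R_3=0$ (together with $R_4=0$, $R_5=-K\phi\phi_s$, $R_2=Ks\phi\phi_s-K\phi^2$); since $R_1,R_3,\phi$ depend only on $(r,s)$, these identities persist on the locus $|s|=r$ by continuity. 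In dimension two the forward direction is vacuous, because ``dimension two'' is offered as the stated alternative.

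For the converse in arbitrary dimension I would use the curvature compatibility condition \eqref{Comp_C_3} as the key tool. Assuming $R_1=K\phi^2$ and $R_3=0$, the universal identity $R_4=-sR_3$ gives $R_4=0$; then \eqref{Comp_C_3} reads $\phi_sK\phi^2+\phi R_5=0$, i.e. $R_5=-K\phi\phi_s$ (using $\phi>0$); and finally $R_2=-R_1-sR_5=Ks\phi\phi_s-K\phi^2$. Substituting these four values into \eqref{Eq:R^i_j} reproduces exactly the displayed expression above, so $F$ is of scalar curvature $K$. The point is that Proposition \ref{Prop: curv_comp} does the essential work here: it upgrades the two hypotheses $R_1=K\phi^2$, $R_3=0$ into the full coefficient system, which is why no separate assumption on $R_5$ is needed.

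It remains to show that every spherically symmetric Finsler surface is of scalar curvature. When $n=2$ and $x,y$ are independent, $\{x^i,y^i\}$ is a basis of $\mathbb{R}^2$; expressing $\delta^i_j$ through the associated dual basis yields the linear relation $u^2(r^2-s^2)\delta^i_j=u^2x^ix_j-su\,x^iy_j-su\,x_jy^i+r^2y^iy_j$. Using it to eliminate $\delta^i_j$ from \eqref{Eq:R^i_j}, together with the universal identities $R_4=-sR_3$ and $R_2=-R_1-sR_5$, one finds that $R^i_j$ collapses to the rank-one form $R^i_j=u\big(uAx^i+By^i\big)n_j$ with $A=\frac{R_1}{r^2-s^2}+R_3$ and $B=R_5-\frac{sR_1}{r^2-s^2}$. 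I would then set $K:=\dfrac{R_1+(r^2-s^2)R_3}{\phi^2}$, a well-defined smooth $0$-homogeneous function since $\phi>0$ and $r^2-s^2\neq0$. Writing the target scalar-curvature tensor in the same rank-one form, equality with $R^i_j$ reduces to $A=\frac{K\phi^2}{r^2-s^2}$---which holds by the definition of $K$---and to $B=-K\phi\phi_s-\frac{sK\phi^2}{r^2-s^2}$, which, after substituting $K\phi^2=R_1+(r^2-s^2)R_3$ and multiplying through by $\phi$, becomes exactly $\phi_sR_1+(s\phi+(r^2-s^2)\phi_s)R_3+\phi R_5=0$, i.e. \eqref{Comp_C_3}. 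Hence $R^i_j=KF^2\big(\delta^i_j-\frac{y^i}{F}\dot{\partial}_jF\big)$ away from the collinear locus, and everywhere by continuity, so $F$ is of scalar curvature $K$. I expect this last step to be the main obstacle: getting the linear dependence relation with the correct index placements, tracking the bookkeeping among $x^ix_j$, $x^iy_j$, $x_jy^i$, $y^iy_j$, and the sign of $K$, and then confirming that \eqref{Comp_C_3} is precisely what makes the residual equation for $B$ hold; the rest is routine substitution.
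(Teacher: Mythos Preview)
Your proof is correct, and the converse under the hypotheses $R_1=K\phi^2$, $R_3=0$ is essentially identical to the paper's (both feed these into \eqref{Comp_C_3} and $R_2=-R_1-sR_5$ to recover $R_5$ and $R_2$, then verify the scalar-curvature identity). The other two pieces, however, follow genuinely different routes.

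For the forward direction with $n\ge 3$, the paper does not invoke linear independence of the five tensor fields $\delta^i_j,\ y^iy_j,\ x^ix_j,\ x^iy_j,\ x_jy^i$. Instead it forms two contractions of the master equation: the trace over $i=j$ yields $(n-1)R_1+(r^2-s^2)R_3=(n-1)K\phi^2$, and contraction by $\mathbf{n}^j$ yields $R_1+(r^2-s^2)R_3=K\phi^2$; subtracting gives $(n-2)(R_1-K\phi^2)=0$. Your basis argument is cleaner and gives all five coefficient identities at once, whereas the paper's contraction method makes the role of the dimension $n$ explicit in a single scalar equation and avoids any discussion of tensor independence.

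For the two-dimensional converse, the paper simply appeals to the classical fact that every Finsler surface is of scalar curvature and stops there. You instead give an intrinsic spherically-symmetric proof: you use the rank-deficiency relation $u^2(r^2-s^2)\delta^i_j=u^2x^ix_j-su\,x^iy_j-su\,x_jy^i+r^2y^iy_j$ to collapse \eqref{Eq:R^i_j} to a rank-one form $u(uAx^i+By^i)n_j$, define $K$ by $K\phi^2=R_1+(r^2-s^2)R_3$, and then observe that matching the remaining coefficient is \emph{exactly} the compatibility condition \eqref{Comp_C_3}. This is a nice bonus---it shows that Proposition~\ref{Prop: curv_comp} already encodes the two-dimensional scalar-curvature property, and it keeps the argument self-contained rather than importing an external theorem. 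The paper's route is shorter; yours is more informative about why the dimension-two case works.
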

\begin{proof}
Assume that $F=u\phi$ is a spherically { symmetric} metric of scalar curvature $K$. Then we have
$$R^i_j=K F^{2}\left(\delta^{i}_{j}-\frac{y^{i}}{F}\frac{\partial F}{\partial y^j}\right)=u^2\phi^2 K\delta^i_j-\phi K (\phi y_j+u \phi_s n_j)y^i.$$
 Using the formula \eqref{Eq:R^i_j}, we can write the above condition for any spherically symmetric Finsler metric of scalar flag curvature as follows
 \begin{equation}
 \label{Eq:R=Scalar}
u^{2}(R_{1} -\phi^2K)\delta^{i}_{j}+R_{2}  y^{i} y_{j}+u^{2}R_{3}  x^{i} x_{j}+uR_{4}  x^{i} y_{j}+uR_{5} x_{j} y^{i}+\phi K (\phi y_j+u \phi_s n_j)y^i=0.
 \end{equation}
 Contracting the indices $i$ and $j$ in \eqref{Eq:R=Scalar} and using the facts that $R_4=-sR_3$, $R_2=-R_1-sR_5$, we get
\begin{equation}
\label{Eq:Scalar_1}
(n-1)R_1+(r^2-s^2)R_3=(n-1)\phi^2 K.
\end{equation}
 Contracting  \eqref{Eq:R=Scalar} by ${\bf{n}}^j$ implies
\begin{equation}
\label{Eq:Scalar_2}
u\left(- s ( R_1- \phi^2 K)+(r^2-s^2)R_5+\phi\phi_s K(r^2-s^2)\right)y^i+\left(u^2( R_1-\phi^2 K)+ u^2(r^2-s^2)R_3\right) x^i=0.
\end{equation}
From which we conclude the following two equations
\begin{equation}
\label{Eq:Scalar_3}
  R_1+  (r^2-s^2)R_3 =\phi^2 K.
\end{equation}
\begin{equation}
\label{Eq:Scalar_4}
   s   R_1-(r^2-s^2)R_5-(s\phi  +\phi_s (r^2-s^2)) \phi K=0.
\end{equation}
Subtracting \eqref{Eq:Scalar_1} and \eqref{Eq:Scalar_3} yields
\begin{equation}
\label{Eq:Scalar_5}
  (n-2)(R_1- \phi^2 K)=0.
\end{equation}
Hence, we have $n=2$ or $R_1=\phi^2 K$. If $R_1=\phi^2 K$, then by \eqref{Eq:Scalar_3} and Lemma \ref{Prop:n_i} (d), we get $R_3=0$.

Conversely, it is known that all two dimensional Finsler metrics are of scalar curvature. Now, let $F$ be a spherically symmetric metric with dimension $n>2$ and  $R_1=K\phi^2$, $R_3=0$. Then by substituting into \eqref{Eq:R^i_j}, we have
$$R_{j}^{i}=u^{2}R_{1} \delta^{i}_{j}+R_{2}  y^{i} y_{j}+uR_{5} x_{j} y^{i}
$$
Using \eqref{Comp_C_3} and  the fact that $R_1=-R_2-sR_5$, then $R_2=-K\phi^2+s K \phi\phi_s$ and $R_5=-K\phi\phi_s$. Therefore, by the fact that
 $\frac{\partial F}{\partial y^j}=\frac{\phi}{u} y_j+\phi_s \left(x_j-\frac{s}{u}y_j\right)$,
 we get
$$R_{j}^{i}=u^{2}K\phi^2 \delta^{i}_{j}+(-K\phi^2+s K \phi\phi_s)  y^{i} y_{j}-uK\phi\phi_s x_{j} y^{i}=K F^2\left(\delta^i_j-\frac{y^i}{F}\frac{\partial F}{\partial y^j}\right).$$
That is, $F$ is of scalar curvature.
\end{proof}
The following example  provides a spherically symmetric surface  with $R_1\neq K\phi^2$ and  $R_3\neq0$.
\begin{example}
 Let $F=u\phi(r,s)$, where
\begin{align*}
  \phi (r,s)
    &=\frac{1}{r^5}  \sqrt{r^{2}-s^{2}}\exp \left(  \frac{2 s  }{ \sqrt{r^{2}-s^{2}} } \right).
\end{align*}
Using, Maple program,  \eqref{P,Q} and the above formula of  $\phi(r,s)$,  we obtain that
\begin{equation}
\label{P_Q_from_phi}
P= -\frac{s}{r^2}-\frac{3}{4 r^2}\,\sqrt {{r}^{2}-{s}^{2}}, \quad Q=\frac{7}{8 r^2}-\frac{3 s^2}{r^4}-\frac{3s}{4 r^4}\,\sqrt {{r}^{2}-{s}^{2}}.
\end{equation}
Also, we have
$$R_1={\frac {25\,{r}^{2}-25\,{s}^{2}}{16\,{r}^{4}}},\quad R_2=-{\frac {25}{16\,{r}^{2}}},\quad R_3=-{\frac {25}{16\,{r}^{4}}},\quad R_4={\frac {25\,s}{16\,{r}^{4}}},\quad R_5={\frac {25\,s}{16\,{r}^{4}}}.$$
For this surface,  we have
$K=0$.
\end{example}

\section{A Berwald frame of a spherically symmetric surface }

Let $(\ell^i,m^i)$ be the Berwald frame  of a Finsler surface.
By \cite{Matsumoto_2D}, we have  the following lemma.
\begin{lemma}\cite{Matsumoto_2D}
The metric tensor $g_{ij}$ and the inverse metric $g^{ij}$ are given, in terms of the Berwald frame, as follows:
$$g_{ij}=\ell_i\ell_j+\varepsilon m_im_j,\quad g^{ij}=\ell^i\ell^j+\varepsilon m^im^j.$$
Moreover, the Cartan tensor $C_{ijk}$ is given by
$$FC_{ijk}=Im_im_jm_k,$$
where $I(x,y)$ is the main scalar of the surface $F=u\phi(r,s)$.
\end{lemma}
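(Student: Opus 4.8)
The plan is to obtain all three identities from the defining orthonormality of the Berwald frame, postponing the explicit construction of a frame adapted to $F=u\phi(r,s)$ to the subsequent part of the paper. Recall that on a Finsler surface one sets $\ell^i:=y^i/F$, so that $\ell_i:=g_{ij}\ell^j=\dot{\partial}_iF$; Euler's theorem applied to the $1$-homogeneous function $F$ gives $g_{ij}\ell^i\ell^j=\ell_i\ell^i=1$. Since each tangent plane is two-dimensional, the $g$-orthogonal complement of $\ell^i$ is a single line; one chooses $m^i$ spanning it and normalized so that $g_{ij}m^im^j=\varepsilon$, with $\varepsilon=\pm1$ (and $\varepsilon=1$ in the regular, positive-definite case). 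Lowering indices, $m_i:=g_{ij}m^j$, the companion relations $\ell_im^i=m_i\ell^i=0$ and $m_im^i=\varepsilon$ follow.

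The key step is the completeness relation $\delta^i_j=\ell^i\ell_j+\varepsilon\,m^im_j$. Since $\{\ell^i,m^i\}$ is a basis of a two-dimensional space, it suffices to check that both sides agree when contracted with $\ell^j$ and with $m^j$, and this is immediate from the orthonormality relations above. Contracting the completeness relation with $g_{ik}$ (using $g_{ik}\ell^k=\ell_i$, $g_{ik}m^k=m_i$) yields the first of the formulas
$$g_{ij}=\ell_i\ell_j+\varepsilon\,m_im_j,\qquad g^{ij}=\ell^i\ell^j+\varepsilon\,m^im^j,$$
while contracting it with $g^{jk}$ (using $g^{jk}\ell_k=\ell^j$, $g^{jk}m_k=m^j$) yields the second; one may also verify directly, with $\varepsilon^2=1$, that these matrices are mutually inverse.

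For the Cartan tensor I would use that $C_{ijk}=\tfrac14\dot{\partial}_i\dot{\partial}_j\dot{\partial}_kF^2=\tfrac12\dot{\partial}_kg_{ij}$ is totally symmetric and satisfies $y^iC_{ijk}=0$ (equivalently $\ell^iC_{ijk}=0$), the latter because $g_{ij}$ is positively $0$-homogeneous in $y$, so $y^k\dot{\partial}_kg_{ij}=0$. Now expand every lower index of $C_{ijk}$ by the completeness relation, $\delta^a_i=\ell^a\ell_i+\varepsilon m^am_i$ and similarly for the other two slots. In the resulting sum, each term in which at least one of the contracted upper indices meets an $\ell$ vanishes by $\ell^aC_{abc}=0$, so only the $m\,m\,m$ term survives and $C_{ijk}=\varepsilon\,\bigl(C_{abc}m^am^bm^c\bigr)\,m_im_jm_k$. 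Defining the main scalar by $I:=\varepsilon F\,C_{abc}m^am^bm^c$ then yields $FC_{ijk}=I\,m_im_jm_k$.

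The substantive points are mostly bookkeeping. Two-dimensionality is used essentially: in higher dimension $C$ has components beyond the $m\,m\,m$ one, so the reduction to a single scalar fails, and the argument must exploit that the $\ell$-orthogonal complement is a line. One must also track that $\varepsilon$ enters the final formulas linearly rather than quadratically, which is precisely what $\varepsilon^2=1$ arranges in the completeness relation, and note that $m^i$ (hence the sign of $I$) is pinned down only after orienting that line, so only $I^2$ is frame-independent. For the metric $F=u\phi(r,s)$ the remaining work is concrete rather than conceptual: build $m^i$ from $\ell_i=\tfrac{\phi}{u}y_i+\phi_s n_i$ (with $n_i$ as in \eqref{Eq:m_j}) and from \eqref{Eq:g_ij}, normalize it using the regularity conditions \eqref{Regular_condition}, and then evaluate $I$ from its definition, which is a direct if lengthy computation.
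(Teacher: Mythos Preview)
Your proof is correct and is the standard derivation of the Berwald frame identities on a Finsler surface. Note, however, that the paper itself offers no proof of this lemma at all: it is simply quoted from \cite{Matsumoto_2D} and used as input for the subsequent constructions. So there is nothing to compare against beyond observing that your argument supplies what the paper omits. One minor remark on consistency with the paper's conventions: later, in the proof of Theorem \ref{Th:main_scalar}, the author computes $I=Fm^im^jm^kC_{ijk}$ without the factor $\varepsilon$; this agrees with your definition $I=\varepsilon F\,C_{abc}m^am^bm^c$ only because the paper is working throughout in the positive-definite case $\varepsilon=1$ (as guaranteed by the regularity condition \eqref{Regular_condition}), so your more general bookkeeping is harmless here.
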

Let  $n^i:=g^{ij}n_j$, then we have the following proposition.
\begin{proposition}
The vector $n^i$ is given by
\begin{equation}\label{Eq:m^i}
n^i =\rho_0 {\bf{n}}^i+\frac{(r^2-s^2)}{u}(\rho_2 y^i+u\rho_3x^i).
\end{equation}
 Moreover, it
 satisfies the properties
$$n^i\ell_i=0,\quad n^in_i=\frac{r^2-s^2}{\phi(\phi-s\phi_s+(r^2-s^2)\phi_{ss})}.$$
\end{proposition}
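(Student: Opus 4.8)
The plan is to compute $n^i = g^{ij} n_j$ directly from the explicit form of the inverse metric \eqref{Eq:g^ij} and the covector $n_j = x_j - \frac{s}{u} y_j$, then verify the two stated scalar identities by contracting the resulting vector against $\ell_i$ and $n_i$. First I would substitute $n_j$ into $g^{ij} n_j = (\rho_0 \delta^{ij} + \frac{\rho_1}{u^2} y^i y^j + \frac{\rho_2}{u}(x^i y^j + x^j y^i) + \rho_3 x^i x^j)(x_j - \frac{s}{u} y_j)$ and reduce using the contractions $y^j y_j = u^2$, $x^j x_j = r^2$, $x^j y_j = su$ from the Preliminaries, together with $y^j n_j = 0$ and $x^j n_j = r^2 - s^2$ from Lemma \ref{Prop:n_i}(b). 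Collecting the coefficients of $y^i$, $x^i$ (equivalently of ${\bf n}^i$ and $y^i$) should yield the factor $r^2-s^2$ in front of the $\rho_2, \rho_3$ terms and reproduce \eqref{Eq:m^i}; this is a routine but slightly delicate bookkeeping step, since one must keep track of which $\rho$-terms survive after the orthogonality relations kill the $y^i$-part of $n_j$.

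Next, for the property $n^i \ell_i = 0$: since $F \ell_i = \dot\partial_i F = \frac{\phi}{u} y_i + \phi_s n_i$ (this is exactly $\frac{\partial F}{\partial y^i}$ computed in the proof of Proposition \ref{Prop: curv_comp}), I would contract \eqref{Eq:m^i} with this covector. Using $y_i {\bf n}^i = 0$, $n_i {\bf n}^i = r^2 - s^2$, $y_i y^i = u^2$, $y_i x^i = su$, $n_i y^i = 0$, and $n_i x^i = r^2-s^2$, the contraction collapses to a combination of $\rho_0$, $\rho_2$, $\rho_3$ times $\phi$, $\phi_s$ and powers of $r,s$; one then checks this combination vanishes identically after inserting the explicit expressions for $\rho_0,\rho_2,\rho_3$ given below \eqref{Eq:g^ij}. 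Alternatively — and more cleanly — one observes $n^i \ell_i = \frac{1}{F} g^{ij} n_j (\dot\partial_i F) = \frac{1}{F} g^{ij} n_j g_{ik} \ell^k \cdot \ell_?$; better yet, since $g^{ij} n_j$ is by definition the raising of $n_i$, we have $n^i \ell_i = g^{ij} n_j \ell_i = n_j \ell^j$, and $\ell^j = \frac{y^j}{F}$, so $n^i \ell_i = \frac{1}{F} n_j y^j = 0$ immediately by Lemma \ref{Prop:n_i}(a). This is the slick route and I would present it instead of the brute-force contraction.

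For the last identity, $n^i n_i = g^{ij} n_i n_j$, I would contract \eqref{Eq:m^i} with $n_i$ using $n_i {\bf n}^i = r^2 - s^2$, $n_i y^i = 0$, $n_i x^i = r^2 - s^2$, giving
\[
n^i n_i = \rho_0 (r^2-s^2) + \frac{(r^2-s^2)}{u}\bigl(\rho_2 \cdot 0 + u\rho_3 (r^2-s^2)\bigr) = (r^2-s^2)\bigl(\rho_0 + (r^2-s^2)\rho_3\bigr).
\]
It then remains to simplify $\rho_0 + (r^2-s^2)\rho_3$. Plugging in $\rho_0 = \frac{1}{\phi(\phi - s\phi_s)}$ and $\rho_3 = -\frac{\phi_{ss}}{\phi(\phi-s\phi_s)(\phi - s\phi_s + (r^2-s^2)\phi_{ss})}$ and putting over the common denominator $\phi(\phi-s\phi_s)(\phi - s\phi_s+(r^2-s^2)\phi_{ss})$, the numerator becomes $(\phi - s\phi_s + (r^2-s^2)\phi_{ss}) - (r^2-s^2)\phi_{ss} = \phi - s\phi_s$, which cancels the factor $\phi - s\phi_s$ in the denominator, leaving $\frac{1}{\phi(\phi - s\phi_s + (r^2-s^2)\phi_{ss})}$. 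Hence $n^i n_i = \frac{r^2-s^2}{\phi(\phi - s\phi_s + (r^2-s^2)\phi_{ss})}$, as claimed. The only real obstacle is the first step — getting \eqref{Eq:m^i} right — since everything afterward is forced; but note that \eqref{Eq:m^i} can also be \emph{verified} a posteriori rather than derived, by checking $g_{ik} n^k = n_i$ using \eqref{Eq:g_ij}, which reduces the risk of a sign error.
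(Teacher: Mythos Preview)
Your proposal is correct and, for the derivation of \eqref{Eq:m^i} and the computation of $n^i n_i$, it is exactly the route the paper takes: contract $g^{ij}$ against $n_j$ using Lemma~\ref{Prop:n_i}, then reduce $n^i n_i = (r^2-s^2)(\rho_0 + (r^2-s^2)\rho_3)$ via the explicit formulae for $\rho_0,\rho_3$ (your algebra for that simplification is in fact more detailed than what the paper writes).

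The one genuine difference is your treatment of $n^i\ell_i=0$. The paper carries out the brute-force contraction you first sketch --- it substitutes $\ell_i=\frac{\phi}{u}y_i+\phi_s n_i$ into \eqref{Eq:m^i}, obtains $(r^2-s^2)\bigl(\phi_s\rho_0+\phi\rho_2+(s\phi+(r^2-s^2)\phi_s)\rho_3\bigr)$, and then checks by hand, using the explicit $\rho_0,\rho_2,\rho_3$, that the bracket vanishes. Your ``slick route'' bypasses all of this: since $g^{ij}\ell_i=\ell^j=y^j/F$, one has $n^i\ell_i=g^{ij}n_j\ell_i=n_j\ell^j=\frac{1}{F}n_jy^j=0$ directly from Lemma~\ref{Prop:n_i}(a). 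This is shorter, avoids any dependence on the particular form of $\rho_0,\rho_2,\rho_3$, and is the argument I would keep; the paper's computation, on the other hand, has the side benefit of recording the algebraic identity $\phi_s\rho_0+\phi\rho_2+(s\phi+(r^2-s^2)\phi_s)\rho_3=0$, which could be useful elsewhere.
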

\begin{proof}
Using the inverse metric \eqref{Eq:g^ij} together with the fact that $n^i=g^{ij}n_j$, we get \eqref{Eq:m^i}.
Now, since   $\ell_i=\frac{\partial (u\phi)}{\partial y^i}=\frac{\phi}{u} y_i+\phi_s n_i$, then we have
\begin{align*}
n^i\ell_i&=\left(\rho_0 {\bf{n}}^i+\frac{(r^2-s^2)}{u}(\rho_2 y^i+u\rho_3x^i)\right)\left(\frac{\phi}{u} y_i+\phi_s n_i\right)\\
&=(r^2-s^2)(\phi_s\rho_0 +\phi\rho_2+(s\phi +(r^2-s^2)\phi_s)\rho_3).
\end{align*}
By using the definition of $\rho_0$, $\rho_1$, $\rho_2$ and $\rho_3$, one can show that
$\phi_s\rho_0 +\phi\rho_2+(s\phi +(r^2-s^2)\phi_s)\rho_3=0.$
That is, $n^i\ell_i=0$.
We can calculate the scalar quantity $n^in_i$ as follows
$$n^in_i=(r^2-s^2)(\rho_0 +(r^2-s^2)\rho_3).$$
One can show that  
$\rho_0 +(r^2-s^2)\rho_3=\frac{1}{\phi(\phi-s\phi_s+(r^2-s^2)\phi_{ss})}.$
That is
$n^in_i=\frac{r^2-s^2}{\phi(\phi-s\phi_s+(r^2-s^2)\phi_{ss})}.$
\end{proof}
 Let $m_i=a(r,s)n_i$,  where
 \begin{equation}
 \label{Eq:a}
 a(r,s)=\sqrt{\frac{\phi(\phi-s\phi_s+(r^2-s^2)\phi_{ss})}{r^2-s^2}}.
 \end{equation}
   Now, we have the following theorem.
\begin{theorem}
The frame $(\ell^i,m^i)$ constitutes an orthonormal frame (the Berwald frame) for spherically symmetric surfaces, where $m^i:=g^{ij}m_j=an^i$, that is;
\begin{equation}
\label{Eq:Berwald_frame}
\ell^i=\frac{y^i}{F}, \quad  m^i =a\left( \rho_0 {\bf{n}}^i+\frac{(r^2-s^2)}{u}(\rho_2 y^i+u\rho_3x^i)\right).
\end{equation}
\end{theorem}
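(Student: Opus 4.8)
The plan is to verify the three defining properties of a Berwald frame for a Finsler surface, namely that $(\ell^i, m^i)$ is orthonormal with respect to $g_{ij}$ with the correct signature, i.e.\ $g_{ij}\ell^i\ell^j = 1$, $g_{ij}\ell^i m^j = 0$, and $g_{ij}m^i m^j = \varepsilon$. Equivalently, by the lemma of Matsumoto quoted above, it suffices to check that $\ell_i = g_{ij}\ell^j$ and $m_i = g_{ij}m^j$ satisfy $\ell^i\ell_i = 1$, $\ell^i m_i = 0$, $m^i m_i = \varepsilon = 1$ (up to orientation), since the decomposition $g_{ij} = \ell_i\ell_j + \varepsilon m_i m_j$ then follows from dimension two. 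The formula $\ell^i = y^i/F$ is standard (the normalized supporting element), and $\ell_i = g_{ij}y^j/F = \partial F/\partial y^i = \frac{\phi}{u}y_i + \phi_s n_i$, which was already recorded in the excerpt; thus $\ell^i\ell_i = y^i\ell_i/F = F/F = 1$ by homogeneity. So the first property is essentially free.

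Next I would turn to the orthogonality $\ell^i m_i = 0$. Since $m_i = a(r,s)\,n_i$ with $a$ as in \eqref{Eq:a}, this reduces to $\ell^i n_i = 0$, i.e.\ $y^i n_i = 0$, which is exactly Lemma \ref{Prop:n_i}(a). Equivalently one can use the already-proven identity $n^i\ell_i = 0$ from the preceding proposition together with $m^i = a n^i$. Either way, this step is immediate from results established earlier in the paper.

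The substantive step is the normalization $m^i m_i = 1$. Writing $m^i m_i = a^2 n^i n_i$, I would invoke the formula $n^i n_i = \dfrac{r^2 - s^2}{\phi\,(\phi - s\phi_s + (r^2-s^2)\phi_{ss})}$ proved in the preceding proposition, and observe that the definition of $a(r,s)$ in \eqref{Eq:a} is precisely the reciprocal square root of this quantity, so that $a^2 n^i n_i = 1$. This also pins down the choice of sign/orientation: the regularity condition \eqref{Regular_condition} guarantees $\phi - s\phi_s + (r^2-s^2)\phi_{ss} > 0$ and Lemma \ref{Prop:n_i}(d) gives $r^2 - s^2 > 0$, so $a$ is real and $\varepsilon = +1$ (the surface case of \eqref{Regular_condition}). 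Having established these three relations, the Matsumoto lemma yields $g_{ij} = \ell_i\ell_j + m_i m_j$, and since $m^i = g^{ij}m_j = a n^i$ with $n^i$ given explicitly by \eqref{Eq:m^i}, substituting produces the closed form \eqref{Eq:Berwald_frame}.

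The only mild obstacle is bookkeeping: one must be careful that the index juggling using $\delta_{ij}$ versus $g_{ij}$ is done consistently (the paper already warns about this), and that the algebraic identity $\rho_0 + (r^2-s^2)\rho_3 = \frac{1}{\phi(\phi - s\phi_s + (r^2-s^2)\phi_{ss})}$ — used to evaluate $n^i n_i$ — is genuinely verified from the definitions of $\rho_0$ and $\rho_3$; but this is the same routine computation already asserted in the proof of the previous proposition, so I would simply cite it. No genuinely hard estimate or delicate argument is involved; the theorem is a consequence of assembling Lemma \ref{Prop:n_i}, the Matsumoto lemma, and the explicit formula for $n^i n_i$ together with the definition \eqref{Eq:a} of $a$.
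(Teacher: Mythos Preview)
Your proposal is correct and follows essentially the same route as the paper: both arguments reduce orthogonality $\ell^i m_i = 0$ to $y^i n_i = 0$ via Lemma~\ref{Prop:n_i}(a), and obtain the normalization $m^i m_i = 1$ by combining the formula $n^i n_i = (r^2-s^2)/\bigl(\phi(\phi - s\phi_s + (r^2-s^2)\phi_{ss})\bigr)$ from the preceding proposition with the definition \eqref{Eq:a} of $a$. You are in fact slightly more thorough than the paper in explicitly verifying $\ell^i\ell_i = 1$ and in justifying $\varepsilon = +1$ from the regularity condition \eqref{Regular_condition}.
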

\begin{proof}
Keeping in mind that $n_i$ and $\ell_i$ are orthogonal.   To build the Berwald frame for a spherically symmetric Finsler surface, we have to choose a vector $m_i$ in the direction of $n_i$ as follows $m_i=a(r,s) n_i$ so that the length of $m_i$ is the unit. That is,
$$m^2:=m^im_i=a^2n^in_i=\frac{a^2(r^2-s^2)}{\phi(\phi-s\phi_s+(r^2-s^2)\phi_{ss})}=1.$$
Therefore, we have
 $$m_i=\sqrt{\frac{\phi(\phi-s\phi_s+(r^2-s^2)\phi_{ss})}{r^2-s^2}} n_i.$$
Since $y^im_i=0$ then $\ell^im_i=\frac{y^i}{F}m_i=\frac{y^i}{u \phi}m_i=0$. Hence, $\ell^i$ is orthogonal to $m_i$ and $m^2=1$, then   the Berwald frame is given by $(\ell^i,  m^i)$.
\end{proof}
\begin{remark}
For the two-dimensional case,  Berwald \cite{Berwald} introduced his  frame for the positive definite metrics.  Later  Basco and Matsumoto    \cite{Matsumoto_2D} modified Berwald frame to cover also the non positive definite metrics. The  modified frame   is given by $(\ell^i,m^i)$, where  $m^i$ is a vector which is orthogonal to the supporting element $\ell_i$ and the co-frame is $(\ell_i,m_i)$. Moreover, we have
 $$m_i=g_{ij}m^j, \quad m^im_i=\varepsilon, \quad \ell^im_i=0,$$
 where $\varepsilon=\pm 1$ and the sign $\varepsilon$ is  called the signature of $F$. In the positive definite case, $\varepsilon=+1$ and otherwise, $\varepsilon=-1$.
\end{remark}

\subsection{The Main scalar of a spherically symmetric surface}

The Cartan tensor of of a spherically symmetric surface can be calculated as follows:
\begin{proposition}\label{Prop:Cartan_tensor}
The components  $C_{ijk}$ of Cartan tensor of a spherically symmetric Finsler metric are  given by
\begin{align*}
C_{ijk}=&\frac{\mu}{2u}(x_i\delta_{jk}+x_j\delta_{ik}+x_k\delta_{ij})+\frac{\nu}{2u}x_ix_jx_k-\frac{s \mu}{2u^2}(y_i\delta_{jk}+y_j\delta_{ik}+y_k\delta_{ij})+\frac{3s\mu-s^3\nu}{2u^4}y_iy_jy_k\\
&+\frac{s^2\nu-\mu}{2u^3}(y_iy_jx_k+y_jy_kx_i+y_iy_kx_j)-\frac{s\nu}{2u^2}(x_ix_jy_k+x_ix_ky_j+x_kx_jy_i),
\end{align*}
where, for simplicity, we use the notations $\mu:=\phi \phi_s-s\phi_s^2-s\phi\phi_{ss}$ and $\nu:=3\phi_s\phi_{ss}+\phi\phi_{sss}$.
\end{proposition}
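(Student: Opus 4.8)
The plan is to compute $C_{ijk}$ straight from its definition $C_{ijk}=\tfrac12\dot{\partial}_i g_{jk}$ (the totally symmetric Cartan tensor), feeding in the explicit expression \eqref{Eq:g_ij} for the metric tensor of $F=u\phi(r,s)$. Since $\sigma_0,\sigma_1,\sigma_2,\sigma_3$ are functions of $r$ and $s$ only, and since by \eqref{Eq:derivatives} and Lemma \ref{Prop:n_i}(c) one has $\dot{\partial}_i r=0$, $\dot{\partial}_i x_j=0$, $\dot{\partial}_i y_j=\delta_{ij}$, $\dot{\partial}_i u=\tfrac{y_i}{u}$ and $\dot{\partial}_i s=\tfrac1u n_i$ with $n_i=x_i-\tfrac su y_i$, differentiating each of the four summands of \eqref{Eq:g_ij} is a mechanical application of the product rule: each factor $\sigma_a$ produces $(\sigma_a)_s\tfrac{n_i}{u}$, each power $u^{-p}$ produces $-p\,u^{-p-1}y_i$, and each explicit $y_j$ or $y_k$ produces a Kronecker delta.

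Before collecting terms I would record the elementary identities that convert the $\sigma$-derivatives into the two functions $\mu$ and $\nu$ of the statement. Differentiating the definitions listed below \eqref{Eq:g_ij} gives $(\sigma_0)_s=\sigma_2=\mu=\phi\phi_s-s\phi_s^2-s\phi\phi_{ss}$, $(\sigma_1)_s=\nu=3\phi_s\phi_{ss}+\phi\phi_{sss}$, $\sigma_3=-s\mu$, $(\sigma_2)_s=-s\nu$ and $(\sigma_3)_s=s^2\nu-\mu$. With these in hand, the raw output of the four differentiations is a linear combination of the terms $\delta_{jk}n_i$, $x_jx_k n_i$, $(x_jy_k+x_ky_j)n_i$, $y_jy_k n_i$, together with the Kronecker-delta terms $x_j\delta_{ik}+x_k\delta_{ij}$ and $y_j\delta_{ik}+y_k\delta_{ij}$ and the $y_i$-terms coming from the $u^{-p}$ factors.

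Next I would substitute $n_i=x_i-\tfrac su y_i$ everywhere, rewrite the whole expression in the pure monomials built from $\delta$, $x$ and $y$, and then symmetrize in $(i,j,k)$ so that everything regroups into the six totally symmetric blocks appearing in the statement, namely $x_i\delta_{jk}+x_j\delta_{ik}+x_k\delta_{ij}$, $x_ix_jx_k$, $y_i\delta_{jk}+y_j\delta_{ik}+y_k\delta_{ij}$, $y_iy_jy_k$, $y_iy_jx_k+y_jy_kx_i+y_iy_kx_j$ and $x_ix_jy_k+x_ix_ky_j+x_kx_jy_i$. Reading off the coefficients and dividing by $2$ yields $\tfrac{\mu}{2u}$, $\tfrac{\nu}{2u}$, $-\tfrac{s\mu}{2u^2}$, $\tfrac{3s\mu-s^3\nu}{2u^4}$, $\tfrac{s^2\nu-\mu}{2u^3}$ and $-\tfrac{s\nu}{2u^2}$ respectively, which is exactly the claimed formula.

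The computation is conceptually trivial; the only step demanding care is this last regrouping, because the $y_i$-part of each $n_i$ (arising from $\dot{\partial}_i\sigma_a$) must be merged with the independent $y_i$-contributions produced by $\dot{\partial}_i(u^{-1})$ and $\dot{\partial}_i(u^{-2})$, and it is precisely this merging that produces the combinations $3s\mu-s^3\nu$ and $s^2\nu-\mu$ in the coefficients of $y_iy_jy_k$ and of $y_iy_jx_k+\cdots$. As a sanity check one can verify $y^iC_{ijk}=0$ by contracting the final formula with $y^i$ using $y^iy_i=u^2$ and $y^ix_i=su$; the six blocks then cancel in pairs, confirming consistency with the homogeneity of $F$.
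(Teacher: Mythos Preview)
Your proposal is correct and follows essentially the same route as the paper: both compute $C_{ijk}=\tfrac12\dot{\partial}_k g_{ij}$ by differentiating \eqref{Eq:g_ij}, using $\dot{\partial}_k s=\tfrac1u n_k$ and the identities $(\sigma_0)_s=\mu$, $(\sigma_1)_s=\nu$, $(\sigma_2)_s=-s\nu$ (the paper records exactly these three formulae and then simply says ``differentiating the metric tensor \dots\ we obtain the Cartan tensor''). Your write-up is more explicit about the bookkeeping (the additional relations $\sigma_2=\mu$, $\sigma_3=-s\mu$, $(\sigma_3)_s=s^2\nu-\mu$, the regrouping into the six symmetric blocks, and the sanity check $y^iC_{ijk}=0$), but there is no genuine methodological difference; one small remark is that no explicit symmetrization step is needed, since $C_{ijk}=\tfrac12\dot\partial_i\dot\partial_j\dot\partial_k E$ is automatically totally symmetric and the regrouping is purely cosmetic.
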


\begin{proof}
According to Lemma \ref{Prop:n_i} (c),  $\frac{\partial s}{\partial y^k}=\frac{1}{u}n_k$. Therefore, we have the following formulae: $$\frac{\partial }{\partial y^k} (\phi(\phi-s\phi_s))=\frac{1}{u}(\phi \phi_s-s\phi_s^2-s\phi\phi_{ss})n_k=\frac{\mu}{u}(x_k-\frac{s}{u}y_k),$$
$$\frac{\partial }{\partial y^k} (\phi_s^2+\phi \phi_{ss})=\frac{1}{u}(3\phi_s\phi_{ss}+\phi\phi_{sss})n_k=\frac{\nu}{u}(x_k-\frac{s}{u}y_k),$$
$$\frac{\partial }{\partial y^k} (\phi \phi_s-s\phi_s^2-s\phi\phi_{ss})=-\frac{s}{u}(3\phi_s\phi_{ss}+\phi\phi_{sss})n_k=-\frac{s\nu}{u}(x_k-\frac{s}{u}y_k).$$
Using the above formulas and \eqref{Eq:derivatives}, then differentiating the metric tensor given in \eqref{Eq:g_ij} with respect to $y^k$ we obtain the  Cartan tensor $C_{ijk}$.
\end{proof}
Now, we are in a position to calculate the main scalar $I$.
\begin{theorem}\label{Th:main_scalar}
The main scalar $I$ of a spherically symmetric Finsler surface is given by
\begin{equation}
\label{Eq:Main_Scalar}
\begin{split}
I=&\frac{\phi}{2}\left( \frac{3\mu}{a}((m^1)^2+(m^2)^2)  -3a\mu  (r^2-s^2)^2 B^2 +\frac{\nu}{a^3}\right),
\end{split}
\end{equation}
where  $B:=\rho_2+s\rho_3$.
\end{theorem}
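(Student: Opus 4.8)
The plan is to pull $I$ out of the identity $FC_{ijk}=Im_im_jm_k$ of the cited lemma above by contracting it with $x^ix^jx^k$ (rather than with $m^im^jm^k$, which would force raising indices). Since $m_i=a(r,s)\,n_i$ and, by Lemma \ref{Prop:n_i}(b), $n_ix^i=r^2-s^2$, we have $m_ix^i=a\,(r^2-s^2)$, so the contraction gives $FC_{ijk}x^ix^jx^k=I\,(m_ix^i)^3=I\,a^3(r^2-s^2)^3$, i.e.
\[
I=\frac{F\,C_{ijk}x^ix^jx^k}{a^3(r^2-s^2)^3}.
\]
Hence the whole problem collapses to evaluating the single scalar $C_{ijk}x^ix^jx^k$.

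To compute it, I would use that $x^i$ is $y$-independent, so that $C_{ijk}x^ix^jx^k=\tfrac12\,x^k\paa_k\big(g_{ij}x^ix^j\big)$. From \eqref{Eq:g_ij} together with $x^ix_i=r^2$, $x^iy_i=su$, $y^iy_i=u^2$ one checks that $g_{ij}x^ix^j$ is a function of $(r,s)$ only; then \eqref{Eq:derivatives} (i.e. $\paa_kr=0$, $\paa_ks=\tfrac1u n_k$) together with Lemma \ref{Prop:n_i}(b) give $C_{ijk}x^ix^jx^k=\tfrac{r^2-s^2}{2u}\,\pa_s\big(g_{ij}x^ix^j\big)$. (Alternatively one substitutes directly into Proposition \ref{Prop:Cartan_tensor}.) Carrying out the $s$-differentiation is where the bulk of the work lies, and the expected outcome is the telescoped identity
\[
C_{ijk}x^ix^jx^k=\frac{(r^2-s^2)^2}{2u}\big(3\mu+(r^2-s^2)\nu\big),
\]
with $\mu,\nu$ as in Proposition \ref{Prop:Cartan_tensor}. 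Plugging this and $F=u\phi$ into the previous display gives the preliminary form $I=\tfrac{\phi}{2}\big(\tfrac{3\mu}{a^{3}(r^2-s^2)}+\tfrac{\nu}{a^{3}}\big)$.

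It then remains to convert $\tfrac{3\mu}{a^{3}(r^2-s^2)}$ into the frame components $m^1,m^2$ appearing in \eqref{Eq:Main_Scalar}. For this I would compute the Euclidean square $(m^1)^2+(m^2)^2=\delta_{ij}m^im^j=a^2\,\delta_{ij}n^in^j$ from \eqref{Eq:m^i}, using ${\bf n}^i{\bf n}_i={\bf n}^ix_i=r^2-s^2$, ${\bf n}^iy_i=0$, $x^iy_i=su$, $x^ix_i=r^2$, $y^iy_i=u^2$. Reorganising the resulting quadratic in $(\rho_0,\rho_2,\rho_3)$ as a perfect square plus a remainder yields $\delta_{ij}n^in^j=(r^2-s^2)\big(A^2+(r^2-s^2)B^2\big)$, where $A:=\rho_0+(r^2-s^2)\rho_3$ and $B:=\rho_2+s\rho_3$. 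By the proposition preceding \eqref{Eq:a} we have $A=n^in_i/(r^2-s^2)=\big(\phi(\phi-s\phi_s+(r^2-s^2)\phi_{ss})\big)^{-1}$, hence $A=1/\big(a^{2}(r^2-s^2)\big)$ by the definition \eqref{Eq:a} of $a$, and so $a^{2}(r^2-s^2)A^{2}=1/\big(a^{2}(r^2-s^2)\big)$. Therefore $(m^1)^2+(m^2)^2=\tfrac{1}{a^{2}(r^2-s^2)}+a^{2}(r^2-s^2)^{2}B^{2}$, i.e. $\tfrac{1}{a^{2}(r^2-s^2)}=\big((m^1)^2+(m^2)^2\big)-a^{2}(r^2-s^2)^{2}B^{2}$; multiplying by $3\mu/a$ replaces $\tfrac{3\mu}{a^{3}(r^2-s^2)}$ by $\tfrac{3\mu}{a}\big((m^1)^2+(m^2)^2\big)-3a\mu(r^2-s^2)^{2}B^{2}$, and substituting into the preliminary form of $I$ produces exactly \eqref{Eq:Main_Scalar}.

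The genuinely delicate points are the two algebraic simplifications highlighted above: the collapse of $C_{ijk}x^ix^jx^k$ to even powers of $r^2-s^2$ (equivalently, that $\pa_s(g_{ij}x^ix^j)=(r^2-s^2)(3\mu+(r^2-s^2)\nu)$), and the completion of $\delta_{ij}n^in^j$ to the square $A^{2}+(r^2-s^2)B^{2}$. Both are routine but demand care with the coefficients $\sigma_k$ and $\rho_k$; everything else is bookkeeping with \eqref{Eq:derivatives}, \eqref{Eq:g_ij}, \eqref{Eq:m^i} and Lemma \ref{Prop:n_i}.
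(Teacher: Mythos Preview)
Your argument is correct and takes a genuinely different route from the paper's.

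The paper computes $I=F\,C_{ijk}m^im^jm^k$ by substituting the explicit Cartan tensor of Proposition~\ref{Prop:Cartan_tensor} and the raised frame vector $m^i$ from \eqref{Eq:Berwald_frame}. This produces the intermediate cubic
\[
I=\frac{\phi}{2}\Big(a^3(r^2-s^2)^3\big(\nu A^3-3(\mu-s^2\nu)AB^2+(3s\mu-s^3\nu)B^3-3s\nu A^2B\big)+3a\mu(r^2-s^2)(A-sB)\,m^im^j\delta_{ij}\Big),
\]
with the paper's $A:=\rho_0+s\rho_2+r^2\rho_3$, and then simplifies via the identity $a^2(r^2-s^2)(A-sB)=1$ together with the factorization $A^3-s^3B^3=(A-sB)(A^2+sAB+s^2B^2)$.

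You instead contract the \emph{lowered} identity $FC_{ijk}=Im_im_jm_k$ with $x^ix^jx^k$, exploiting the very clean value $m_ix^i=a(r^2-s^2)$; this collapses the problem to the single scalar $C_{ijk}x^ix^jx^k=\frac{r^2-s^2}{2u}\,\partial_s(g_{ij}x^ix^j)$, whose factorization into $(r^2-s^2)(3\mu+(r^2-s^2)\nu)$ follows from $\partial_s\sigma_0=\mu$, $\partial_s\sigma_1=\nu$, $\partial_s\sigma_2=-s\nu$, $\partial_s\sigma_3=-\mu+s^2\nu$. The cubic in $A,B$ and its algebraic factorization are thereby bypassed entirely; your only nontrivial step is the quadratic reorganisation $\delta_{ij}n^in^j=(r^2-s^2)\big(A^2+(r^2-s^2)B^2\big)$, which you handle correctly. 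One caution on notation: your $A:=\rho_0+(r^2-s^2)\rho_3$ coincides with the paper's $A-sB$, not with the paper's $A$; since only $B$ enters the statement this is harmless, but keep it in mind when cross-referencing.
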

\begin{proof}
We have the following two formulas
$$m^ix_i=a(r^2-s^2)(\rho_0+s\rho_2+r^2\rho_3)=a(r^2-s^2)A,$$
$$m^iy_i=a u (r^2-s^2)(\rho_2+s\rho_3)=a u (r^2-s^2)B,$$
where $A:=\rho_0+s\rho_2+r^2\rho_3$.
Substituting the above  formulas and the Cartan tensor given in Proposition  \ref{Prop:Cartan_tensor} into the formula
$$I=Fm^im^jm^kC_{ijk}=u \phi m^im^jm^kC_{ijk}$$
we get the formula
\begin{equation}
\label{Eq:I_1}
\begin{split}
I=&\frac{\phi}{2}\big{(}a^3(r^2-s^2)^3\left(\nu A^3-3(\mu-s^2\nu)AB^2+(3s\mu -s^3\nu)B^3-3s\nu A^2B\right)\\
&+3 a \mu (r^2-s^2)(A-sB) m^im^j\delta_{ij}\big{)}.
\end{split}
\end{equation}
Straightforward calculations imply the property $$a^2(r^2-s^2)(A-sB)=1.$$
Using the above property together with the fact that
$$A^3-s^3B^3=(A-sB)(A^2+sAB+s^2B^2)$$
 one can simplify  \eqref{Eq:I_1} together with the fact that $ m^im^j\delta_{ij}=(m^1)^2+(m^2)^2$ we get  the  required  formula of the main scalar as in \eqref{Eq:Main_Scalar}.
\end{proof}

\begin{theorem}
For a  spherically symmetric Finsler  surface $F=u\phi(r,s)$,   $I=0$ if and only if
  $\mu=0$. That is, $F$ is Riemannian if and only if  $\mu=0$.
\end{theorem}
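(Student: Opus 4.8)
The plan is to read everything off the closed form \eqref{Eq:Main_Scalar} for the main scalar $I$, once its bracket is simplified. First I would reduce \eqref{Eq:Main_Scalar}. Put $W:=\phi-s\phi_s+(r^2-s^2)\phi_{ss}>0$, so that $a^2(r^2-s^2)=\phi W$ by \eqref{Eq:a}. Writing $n^i=g^{ij}n_j$ from \eqref{Eq:m^i} in the form $n^i=p\,x^i+\frac{q}{u}\,y^i$ with $p:=\rho_0+(r^2-s^2)\rho_3$ and $q:=(r^2-s^2)\rho_2-s\rho_0$ (this is just \eqref{Eq:m^i} with $\mathbf{n}^i=x^i-\frac{s}{u}y^i$ inserted), using $m^i=an^i$ and the Euclidean contractions $\delta_{ij}x^ix^j=r^2$, $\delta_{ij}y^iy^j=u^2$, $\delta_{ij}x^iy^j=su$, one gets
$$(m^1)^2+(m^2)^2=\delta_{ij}m^im^j=a^2\bigl(p^2r^2+2pqs+q^2\bigr).$$
Since moreover $ps+q=(r^2-s^2)B$, we have $a^2(r^2-s^2)^2B^2=a^2(ps+q)^2$, and subtracting the cross terms cancel:
$$(m^1)^2+(m^2)^2-a^2(r^2-s^2)^2B^2=a^2p^2(r^2-s^2)=\frac{1}{\phi W},$$
where the last equality uses $\rho_0+(r^2-s^2)\rho_3=\frac{1}{\phi W}$ and $a^2(r^2-s^2)=\phi W$. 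Feeding this into \eqref{Eq:Main_Scalar} and using $a^3=a\,\phi W/(r^2-s^2)$ collapses it to
$$I=\frac{\phi}{2}\left(\frac{3\mu}{a\,\phi W}+\frac{\nu}{a^3}\right)=\frac{3\mu+(r^2-s^2)\nu}{2\,a\,W}.$$
Since $\phi,a,W>0$ and $r^2-s^2\ne 0$ by Lemma \ref{Prop:n_i}(d), this already yields: $I=0$ if and only if $3\mu+(r^2-s^2)\nu=0$.

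Next I would record the elementary identity obtained by differentiating $\mu=\phi\phi_s-s\phi_s^2-s\phi\phi_{ss}$ in $s$, namely $\mu_s=-s(3\phi_s\phi_{ss}+\phi\phi_{sss})=-s\nu$. For the implication $\mu=0\Rightarrow I=0$ nothing else is needed: $\mu\equiv 0$ forces $\mu_s\equiv 0$, hence $s\nu\equiv 0$, hence $\nu\equiv 0$ (for $s\ne 0$, and by continuity everywhere), and then \eqref{Eq:Main_Scalar} gives $I=0$. For the converse, $I=0$ gives $3\mu+(r^2-s^2)\nu=0$; multiplying by $s$ and substituting $s\nu=-\mu_s$ yields the first-order linear equation $(r^2-s^2)\mu_s=3s\mu$, that is $\partial_s\bigl((r^2-s^2)^{3/2}\mu\bigr)=0$, so $(r^2-s^2)^{3/2}\mu=g(r)$ depends on $r$ alone and $\mu=g(r)(r^2-s^2)^{-3/2}$.

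The one genuinely delicate step is to conclude $g\equiv 0$, and this is where the real obstacle lies: it needs the regularity of $\phi$ up to the boundary $|s|=r$. Since $\mu=\partial_s\bigl(\phi(\phi-s\phi_s)\bigr)$, integrating from $s=0$ gives
$$\phi(\phi-s\phi_s)(r,s)=\phi(r,0)^2+g(r)\int_0^s(r^2-t^2)^{-3/2}\,dt=\phi(r,0)^2+\frac{g(r)\,s}{r^2\sqrt{r^2-s^2}}.$$
The left-hand side is smooth, hence bounded, in $s$ on $|s|\le r$, while the right-hand side blows up as $s\to r^-$ unless $g(r)=0$; therefore $g\equiv 0$ and $\mu=0$. (Without boundary regularity the statement would fail, since the family $\phi^2=c_0(r)+c_1(r)s^2+c_3(r)s\sqrt{r^2-s^2}$ satisfies $3\mu+(r^2-s^2)\nu=0$, hence $I=0$, while $\mu=\frac{1}{2}c_3(r)\,r^4(r^2-s^2)^{-3/2}\ne 0$.) Finally, $\mu=0$ is exactly the Riemannian condition: writing $h:=\phi\phi_s$, the relation $\mu=h-sh_s=0$ says $\partial_s(h/s)=0$, so $h=c_1(r)s$ and $\phi^2=c_0(r)+c_1(r)s^2$; then $F^2=u^2\phi^2=c_0(r)|y|^2+c_1(r)\langle x,y\rangle^2$ is a quadratic form in $y$ and $F$ is Riemannian, and conversely a Riemannian surface has $C_{ijk}=0$, hence $I=0$, hence $\mu=0$ by what precedes.
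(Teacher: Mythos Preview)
Your argument is correct, and the implication $\mu=0\Rightarrow I=0$ coincides with the paper's: both use $\mu_s=-s\nu$ to conclude $\nu=0$ and then read $I=0$ off the formula. The difference is in the converse. The paper does not analyze \eqref{Eq:Main_Scalar} at all for this direction; it simply invokes the classical fact that $I=0$ characterizes Riemannian surfaces, writes the generic Riemannian $\phi=\sqrt{f_1(r)s^2+f_2(r)}$, and checks $\mu=0$ by hand. You instead simplify \eqref{Eq:Main_Scalar} to the compact closed form $I=\dfrac{3\mu+(r^2-s^2)\nu}{2aW}$, turn $I=0$ into the linear ODE $(r^2-s^2)\mu_s=3s\mu$ via $s\nu=-\mu_s$, integrate to $\mu=g(r)(r^2-s^2)^{-3/2}$, and kill $g$ by the boundedness of $\phi(\phi-s\phi_s)$ at $s=r$.

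What your route buys: the identity $I=\dfrac{3\mu+(r^2-s^2)\nu}{2aW}$ is itself a sharpening of Theorem \ref{Th:main_scalar} and makes the equivalence transparent without appealing to the external ``$I=0\Leftrightarrow$ Riemannian'' characterization; your counterexample family $\phi^2=c_0+c_1 s^2+c_3 s\sqrt{r^2-s^2}$ pinpoints exactly why smoothness up to $|s|=r$ (implicit in the paper's standing hypothesis ``for all $|s|\le r<r_0$'') is essential. What the paper's route buys: brevity, since once one is willing to quote the Riemannian characterization of $I=0$, the computation of $\mu$ for a quadratic $\phi^2$ is a one-line check.
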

\begin{proof}
It is known that   a Riemannian surface is characterized by the vanishing of its main scalar. Now, if $F=u\phi(r,s)$ is Riemannian, then $F^2=u^2\phi^2$ is quadratic in $y$. That is, $\phi$ must be in the form
$$\phi=\sqrt{f_1(r)s^2+f_2(r)}$$
where $f_1(r)$ and $f_2(r)$ are arbitrary functions.
This implies that $\mu=\phi \phi_s-s\phi_s^2-s\phi\phi_{ss}=0$.

Conversely, if $\mu=0$, then using the fact  that $\displaystyle{\frac{\partial \mu}{\partial s}}=-s \nu$, we obtain that $\nu=0$. By substituting $\mu=0$ and $\nu=0$ into $I$, we get $I=0$.  This completes the proof.
\end{proof}

We end this section by the following examples:
\begin{example}
Let $F$ be a Randers surface of a spherically symmetric Finsler surface, that is, 
  $$F=u(1+s), \quad \phi(s)=1+s.$$
  One can use the Maple program to do most of the following calculations. 
  The supporting elements $\ell_1$ and $\ell_2$ are given by:
  $$\ell_1=\frac{\partial F}{\partial y^1}=\frac{\phi}{u} y^1+\phi_s\left(x^1-\frac{s}{u}y^1\right)=x^1+\frac{y^1}{u}.$$
  $$\ell_2=\frac{\partial F}{\partial y^2}=\frac{\phi}{u} y^2+\phi_s\left(x^2-\frac{s}{u}y^2\right)=x^2+\frac{y^2}{u}.$$
  Using \eqref{Eq:a}, we have
  $a(r,s)=\sqrt{\frac{1+s}{r^2-s^2}}.$
  Then, we have
  $$m_1=an_1= \left( x^1-\frac{s}{u} y^1 \right)\sqrt{\frac{1+s}{r^2-s^2}}, \quad m_2=a n_2= \left( x^2-\frac{s}{u} y^2\right)\sqrt{\frac{1+s}{r^2-s^2}}.$$
  The functions $\rho_0$, $\rho_1$, $\rho_2$ and $\rho_3$ are given by:
  $$\rho_0=\frac{1}{1+s},\quad \rho_1=\frac{r^2+s}{(1+s)^3},\quad \rho_2=-\frac{1}{(1+s)^2},\quad \rho_3=0.$$
  Then, by \eqref{Eq:Berwald_frame}, we have  $m^i$ are given as follows:
  $$\ell^1=\frac{y^1}{F}=\frac{y^1}{u(1+s)}, \quad  \ell^2=\frac{y^2}{F}=\frac{y^2}{u(1+s)},$$
  $$m^1=-\frac{r^2y^1-s u x^1+sy^1-ux^1}{u(1+s)^2}\sqrt{\frac{1+s}{r^2-s^2}}  ,  \quad m^2= -\frac{r^2y^2-s u x^2+sy^2-ux^2}{u(1+s)^2}\sqrt{\frac{1+s}{r^2-s^2}}.$$
  To obtain the main scalar, we have
  $$\mu=\phi \phi_s-s\phi_s^2-s\phi\phi_{ss}=1, \quad \nu=3\phi_s\phi_{ss}+\phi\phi_{sss}=0,\quad B=\rho_2+s\rho_3=-\frac{1}{(1+s)^2}.$$
  Now, by substituting the above formulae into \eqref{Eq:Main_Scalar}, we get the  main scalar, that is, 
  $$I=-\frac{3}{2}\frac{r^2-2s^2-2s-2}{(1+s)^2}\sqrt{\frac{r^2-s^2}{1+s}}.$$
\end{example}

\begin{example}
 Let $F=u\phi(r,s)$, where
\begin{align*}
  \phi (r,s)
    &=\sqrt{1+s^2}.
\end{align*}
Using, Maple program,  \eqref{P,Q} and the above formula of  $\phi(r,s)$,  we obtain that
\begin{equation}
\label{P_Q_from_phi}
P= 0, \quad Q=\frac{1}{2(1+ r^2)}.
\end{equation}
Also, we have
$$R_1=\frac{1}{1+ r^2},\quad R_2=-\frac{1}{1+ r^2},\quad R_3=-\frac{1}{(1+ r^2)^2},\quad R_4=\frac{s}{(1+ r^2)^2},\quad R_5=0.$$
For this surface,  we have
$K=\frac{1}{(1+r^2)^2}$.
\end{example}

\section{A note on a general $(\alpha,\beta)$-metric}

 Let  $\alpha=\sqrt{a_{ij}y^iy^j}$ be a Riemannian metric and $\beta=b_i(x)y^i$ be a $1$-form on $M$.
 A regular general $(\alpha,\beta)$-metric is defined by a  Finsler function   $F$  on the form
 $$F=\alpha \varphi(b^2,s),\  s=\frac{\beta}{\alpha}$$
 such that $\varphi$  is a positive smooth function, $ \|\beta_x\|_\alpha<b_0$ and

 	\begin{equation}\label{phi_conditions}
 	 \varphi(b^2,s)-s\varphi_s(b^2,s)>0, \quad \varphi(b^2,s)-s\varphi_s(b^2,s)+(t^2-s^2)\varphi_{ss}(b^2,s)>0, \quad |s|<t<b_0.
 	\end{equation}
 For more details cf. \cite{Yu-Zhu}.

An $(\alpha,\beta)$-metric is a special class of  general $(\alpha,\beta)$-metric, namely, when $\varphi$ is independent of $b^2$.
 Another interesting particular case, $F$ is  {spherically symmetric metric} when $\alpha=|y|$ is standard  Euclidean norm and $\beta=\langle x , y \rangle$ is the standard  Euclidean inner product.

For a general $(\alpha,\beta)$-metric $F=\alpha\varphi(s)$,   the determinant of the metric tensor  $g_{ij}$ is given by
\begin{equation}
\label{Eq:det(g)}
\det(g_{ij})=\varphi^{n+1}(\varphi-s\varphi_s)^{n-2}(\varphi-s\varphi_s+(b^2-s^2)\varphi_{ss})\det(a_{ij}),
\end{equation}
It is clear that the condition  \eqref{phi_conditions} is among the regularity conditions of the  Finsler function $F=\alpha \varphi(b^2,s)$. But in a wider context or applications we may require $F$ non-regular and so we may lose the condition \eqref{phi_conditions}. The following theorem shows how we can choose $\varphi(b^2,s)$.
\begin{theorem}
 \label{Excluding}
We have the following equivalences:
\begin{description}
\item[(a)] $ \varphi-s\varphi_s=0\Longleftrightarrow\varphi=f(b^2)s$,
\item[(b)]$\varphi\varphi_s-s(\varphi_s^2+\varphi \varphi_{ss})=0 \Longleftrightarrow \varphi(b^2,s)=f_1(b^2)s+f_2(b^2)\sqrt{b^2-s^2}$,
\end{description}
{ where} $ f_1$   and $ f_2$  are  arbitrary functions of $b^2$. Moreover, in both cases such choices for $\varphi$ lead to a degenerate metric; that is, $\det(g_{ij})=0$. Therefore,    these choices should be excluded.
\end{theorem}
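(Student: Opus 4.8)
For the final statement (Theorem \ref{Excluding}), the plan is to read each of the two equivalences as an ordinary differential equation in the single variable $s$, with $b^{2}$ appearing only as a parameter, to integrate it, and then to obtain the ``moreover'' part by substituting the resulting family of $\varphi$'s into the determinant formula \eqref{Eq:det(g)}. In each equivalence the content sits in the implication ``equation $\Rightarrow$ normal form''; the reverse implication is the same substitution read backwards and is a mechanical check.

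For (a) this is essentially one line: $\varphi-s\varphi_{s}=0$ is equivalent to $\partial_{s}(\varphi/s)=-(\varphi-s\varphi_{s})/s^{2}=0$, so $\varphi/s$ depends on $b^{2}$ alone and $\varphi=f(b^{2})s$ for some function $f$ of $b^{2}$; conversely, if $\varphi=f(b^{2})s$ then $\varphi_{s}=f(b^{2})$ and the left-hand side vanishes identically. For (b) the equation is a second-order ODE in $s$, and I would identify its general solution as the two-parameter family $\varphi=f_{1}(b^{2})s+f_{2}(b^{2})\sqrt{b^{2}-s^{2}}$: first by exhibiting $\varphi=s$ and $\varphi=\sqrt{b^{2}-s^{2}}$ as two independent solutions — for the latter using $g:=\sqrt{b^{2}-s^{2}}$, $g_{s}=-s/g$, $g_{ss}=-b^{2}/g^{3}$, together with $g^{2}+s^{2}=b^{2}$, to see the equation collapse — and then arguing that these exhaust the solution space, with the constants of integration arbitrary functions of $b^{2}$. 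The converse for (b) is the substitution with general $f_{1},f_{2}$: with the same $g$ one has $\varphi_{s}=f_{1}-f_{2}s/g$ and $\varphi_{ss}=-f_{2}b^{2}/g^{3}$, and a short cancellation returns the equation.

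For the degeneracy claim I would feed each family into \eqref{Eq:det(g)}. In case (a), $\varphi=f(b^{2})s$ gives $\varphi-s\varphi_{s}=0$ and $\varphi_{ss}=0$; hence the factor $(\varphi-s\varphi_{s})^{n-2}$ vanishes when $n\ge3$, while the factor $\varphi-s\varphi_{s}+(b^{2}-s^{2})\varphi_{ss}$ vanishes for every $n\ge2$, so $\det(g_{ij})=0$ in all dimensions. In case (b), the computation above gives $\varphi-s\varphi_{s}=f_{2}b^{2}/g$ and $(b^{2}-s^{2})\varphi_{ss}=-f_{2}b^{2}/g$, so $\varphi-s\varphi_{s}+(b^{2}-s^{2})\varphi_{ss}\equiv0$ and \eqref{Eq:det(g)} again forces $\det(g_{ij})=0$; in particular the second inequality in \eqref{phi_conditions} degenerates to an equality, so these metrics fall outside the regular class and must be excluded. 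The one step that is not pure bookkeeping is the integration of the second-order ODE in (b): the real point is to be certain that $s$ and $\sqrt{b^{2}-s^{2}}$ span its entire solution space, after which everything reduces to elementary algebra in $s$ with $b^{2}$ held fixed.
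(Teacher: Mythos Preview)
Your treatment of (a) and of the degeneracy clause is fine and essentially matches the paper. The genuine gap is in (b). The equation $\varphi\varphi_s - s(\varphi_s^2 + \varphi\varphi_{ss}) = 0$ is \emph{nonlinear} in $\varphi$, so the strategy ``exhibit two independent solutions $s$ and $\sqrt{b^2-s^2}$ and conclude that their linear span is the solution set'' is not available: superposition fails. Worse, the ``short cancellation'' you promise for the converse does not occur. With $g=\sqrt{b^2-s^2}$ and $\varphi=f_1 s+f_2 g$ one has $\varphi-s\varphi_s = f_2 b^2/g$ and $\varphi\varphi_{ss} = -f_1 f_2 s b^2/g^3 - f_2^2 b^2/g^2$, and a direct computation gives
\[
\varphi\varphi_s - s(\varphi_s^2+\varphi\varphi_{ss}) \;=\; \frac{f_1 f_2\, b^4}{(b^2-s^2)^{3/2}},
\]
which vanishes only when $f_1 f_2 = 0$. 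So the equivalence in (b), read literally, is false, and no argument can establish it. (If you want the actual solutions of the stated nonlinear equation, note that it says $\partial_s\bigl(\varphi(\varphi-s\varphi_s)\bigr)=0$, hence $\varphi^2 - \tfrac{s}{2}(\varphi^2)_s$ is independent of $s$, giving $\varphi=\sqrt{c(b^2)s^2+h(b^2)}$.)

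What the paper's own proof of (b) actually does is silently replace the stated nonlinear equation by the \emph{linear} equation $\varphi - s\varphi_s + (b^2-s^2)\varphi_{ss} = 0$, which is precisely the third factor in \eqref{Eq:det(g)}. It then uses the reduction $(\varphi - s\varphi_s)_s = -s\varphi_{ss}$ to integrate once, obtaining $\varphi - s\varphi_s = f(b^2)/\sqrt{b^2-s^2}$, and integrates again to reach $\varphi = f_1(b^2)s + f_2(b^2)\sqrt{b^2-s^2}$. For \emph{that} linear equation your ``two solutions span'' argument would be perfectly valid, and the degeneracy then follows exactly as you wrote, since this is the factor that vanishes in \eqref{Eq:det(g)}. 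In short, (b) as printed conflates two different operators; the one compatible with the claimed solution family and with the determinant argument is the linear one, and that is the statement you should prove.
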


\begin{proof} 
\noindent (a)  Let $ \varphi-s\varphi_s=0$, then we have $\frac{\varphi_s}{\varphi}=\frac{1}{s}.$
The solution of this  differential equation is  $\varphi(b^2,s)=f(b^2)s$.
Therefore,  the Finsler metric $F$ is given by
$$F=\alpha \varphi=\alpha f(b^2) \frac{\beta}{\alpha}=f(b^2) \beta.$$
Since the $1$-form $\beta$ is linear in $y$, then   the above formula of $F$ leads to a degenerate metric, that is, $\det(g_{ij})=0$ which   impossible and hence $ \varphi-s\varphi_s\neq 0$. If $\varphi(b^2,s)=f(b^2)s$, then we have, obviously, $\varphi-s\varphi_s=0$ and the proof of (a)  is done.

\noindent (b) Now, assume the differential equation
$$\varphi-s\varphi_s+(b^2-s^2)\varphi_{ss}=0.$$
By (a) we have  $ \varphi-s\varphi_s\neq 0$, then we can write the above equation as follows
$$\frac{-s\varphi_{ss}}{\varphi-s\varphi_s}=\frac{s}{b^2-s^2}.$$
Since $(\varphi-s\varphi_s)_s=-s\varphi_{ss}$, then we have
$$\varphi-s\varphi_s=\frac{f(b^2)}{\sqrt{b^2-s^2}}.$$
Which has the solution
$$\varphi(b^2,s)=f_1(b^2)s+f_2(b^2)\sqrt{b^2-s^2}$$
where $f_1 $ and $ f_2 $ are  arbitrary functions of $b^2$. Moreover, since the determinant of $g_{ij}$ is given by
$$\det(g_{ij})=\varphi^{n+1}(\varphi-s\varphi_s)^{n-2}(\varphi-s\varphi_s+(b^2-s^2)\varphi_{ss})\det(a_{ij}), $$
then we conclude that $\det(g_{ij})=0$. Conversely, if   $\varphi(b^2,s)=f_1(b^2)s+f_2(b^2)\sqrt{b^2-s^2}$, then we get that $\varphi\varphi_s-s(\varphi_s^2+\varphi \varphi_{ss})=0$. Which completes the proof.
\end{proof}

\subsection{Special cases and examples}
Since many examples  appeared in the literature look like the metric given in Theorem \ref{Excluding} (b), we have to be careful when we deal with such a metric. Here we mention some papers where the this metric appeared.

  \medskip

In \cite[Example 9.2]{Example-9.2}, the   general $(\alpha,\beta)$-metric with the function $\varphi$ given by
$$
\varphi\left(b^{2}, s\right)=\frac{\sqrt{b^{2}-s^{2}}}{b^{2}}.
$$
It was mentioned, in \cite[Example 9.2]{Example-9.2}, that the metric $F=\alpha \varphi\left(b^{2}, s\right)$ is locally projectively flat with vanishing flag curvature. According to Theorem \ref{Excluding}  (b), the above function $\varphi$ is just a special case  by the choice $f_1(b^2)=0$ and $f_2(b^2)=\frac{1}{b^2}$. That is, the metric $F$ is degenerate on the whole tangent bundle.

 \medskip

In \cite[Theorem 1.1]{general_3}, it was mentioned that the metric
$$
F=\sqrt{f(b^{2}) \alpha^{2}+g(b^{2}) \beta^{2}}+h(b^{2}) \beta
$$
 is of isotropic Berwald curvature.
 According to Theorem \ref{Excluding},     the choice    $f\left(b^{2}\right) =b^2$ and $g\left(b^{2}\right) =-1$ should be excluded.
One can find more papers where the case under consideration appears, for example, we refer to \cite{general_1,general_2,general_3}.

 \medskip

In \cite[Theorem 1.1]{Theorem-1.1-Shen}, it was stated that   the metric
$$F=\sqrt{\alpha^{2}+k \beta^{2}}+\epsilon \beta$$
 is projectively flat with constant flag curvature $K<0$, where $k$ and $\epsilon \neq 0$ are constants.
 According to Theorem \ref{Excluding}, if $b^2=1$ and $k=-1$, then the above metric is degenerate.
 See \cite[Theorem 1.1]{Cheng-Shen} $F=k_1\sqrt{\alpha^{2}+k_2 \beta^{2}}+k_3 \beta$ is   of isotropic
S-curvature under certain conditions, see also \cite{Shen}.

 \medskip

In \cite[Theorem 1.2]{Taybi}, it was stated that all Randers metrics on the form
$$
F=u\left(\sqrt{p(r) {s}^{2}+q(r)}+c(r) {s}\right) \quad \text { or } \quad F=\frac{u \ t(r)}{c}
$$
are $L$-reducible metrics. According to Theorem \ref{Excluding}, we have to exclude the choices that lead to $p(r) s^{2}+q(r)=f(r)(r^2-{s}^2)$.
More articles in the context of spherically symmetric metrics such that the above situation appears. For example, we refer to  \cite{E.Guo-etal,L.Zhou}.

{
\section*{Acknowledgment}
The author  would like to thank the reviewers for their constructive
comments and recommendations that improved the paper.}

\section*{Declarations}
\begin{itemize}
\item \textbf{Competing interests}: The author  declares no conflict of interest.
  \item \textbf{Availability of data and material}: Not applicable.
  \item \textbf{Funding}: Not applicable.
  \item \textbf{Authors' contributions}: The author  has made substantive contributions
to the article and assume full responsibility for its content. The author  read and approved the final manuscript.

\end{itemize}

\end{document}